\setlist[enumerate]{nosep}
\definecolor{labelkey}{rgb}{0,0.08,0.45}
\definecolor{refkey}{rgb}{0,0.6,0.0}
\definecolor{Brown}{rgb}{0.45,0.0,0.05}
\definecolor{lime}{rgb}{0.00,0.8,0.0}
\definecolor{lblue}{rgb}{0.5,0.5,0.99}
\colorlet{hlcyan}{cyan!30}
\def\namedlabel#1#2{\begingroup
   \def\@currentlabel{#2}%
   \label{#1}\endgroup
}
\newcommand{\sepp}{\setlength{\itemsep}{-2pt}}
\newcommand{\weakly}{\ensuremath{\:{\rightharpoonup}\:}}
\newcommand{\nnn}{\ensuremath{{n\in{\mathbb N}}}}
\newcommand{\thalb}{\ensuremath{\tfrac{1}{2}}}
\newcommand{\halb}{\ensuremath{\frac{1}{2}}}
\newcommand{\menge}[2]{\big\{{#1}~\big |~{#2}\big\}}
\newcommand{\Menge}[2]{\left\{{#1}~\Big|~{#2}\right\}}
\newcommand{\fenv}[1]%
{\ensuremath{\,\overrightarrow{\operatorname{env}}_{#1}}}
\newcommand{\benv}[1]%
{\ensuremath{\,\overleftarrow{\operatorname{env}}_{#1}}}
\newcommand{\scal}[2]{\left\langle{#1},{#2}  \right\rangle}
\newcommand{\RR}{\ensuremath{\mathbb R}}
\newcommand{\myS}{\ensuremath{\mathbb S}}
\newcommand{\NN}{\ensuremath{\mathbb N}}
\newcommand{\conv}{\ensuremath{\operatorname{conv}\,}}
\newcommand{\cspan}{\ensuremath{\overline{\operatorname{span}}\,}}
\newcommand{\Id}{\ensuremath{\operatorname{Id}}}
\DeclarePairedDelimiterX\set[2]{ \{ }{ \}_{#2} }{#1}
\DeclarePairedDelimiterX\rb[1]{ ( }{ ) }{#1}
\crefname{equation}{}{equations}
\crefname{figure}{Figure}{Figures}
\crefname{chapter}{Appendix}{chapters}
\crefname{item}{}{items}
\crefname{enumi}{}{}
\theoremstyle{definition}
\newtheorem{theorem}{Theorem}[section]
\newtheorem{lemma}[theorem]{Lemma}
\newtheorem{corollary}[theorem]{Corollary}
\newtheorem{proposition}[theorem]{Proposition}
\newtheorem{example}[theorem]{Example}
\newtheorem{fact}[theorem]{Fact}
\newtheorem{remark}[theorem]{Remark}
\providecommand{\RR}{\mathbb{R}}
\providecommand{\conv}{\operatorname{conv}}
\providecommand{\Id}{\operatorname{{ Id}}}
\providecommand{\NN}{\mathbb{N}}
\providecommand{\Id}{\operatorname{Id}}
\providecommand{\spn}{\operatorname{span}}
\providecommand{\RR}{\mathbb{R}}
\providecommand{\NN}{\mathbb{N}}
\definecolor{myblue}{rgb}{.8, .8, 1}
  \newcommand*\mybluebox[1]{%
    \colorbox{myblue}{\hspace{1em}#1\hspace{1em}}}
\begin{document}

\title{\textsc{
%Projection onto a pair of orthogonal axes 
The projection onto the cross 
%some nonconvex cones, EDM cones, exponential cones, 
}}

\author{
Heinz H.\ Bauschke\thanks{
Mathematics, University
of British Columbia,
Kelowna, B.C.\ V1V~1V7, Canada. E-mail:
\texttt{heinz.bauschke@ubc.ca}.},~
Manish\ Krishan Lal\thanks{
Mathematics, University
of British Columbia,
Kelowna, B.C.\ V1V~1V7, Canada. E-mail:
\texttt{manish.krishanlal@ubc.ca}.},
and
Xianfu Wang\thanks{
Mathematics, University
of British Columbia,
Kelowna, B.C.\ V1V~1V7, Canada. E-mail:
\texttt{shawn.wang@ubc.ca}.}
}

\date{January 27, 2022} % new version...
\maketitle

\vskip 8mm

\begin{abstract} \noindent
%A pair of lines is a building block of all the conics and a degenerate case of quadric surfaces. 
We consider the set of pairs of orthogonal vectors in Hilbert space, which 
is also called the cross because it is the union of the horizontal 
and vertical axes in the Euclidean plane when the underlying space is the real line. 
Crosses, which are nonconvex sets, play a significant role in various branches of nonsmooth analysis 
such as feasibility problems and optimization problems. 

In this work, we study crosses and show that in 
infinite-dimensional settings, they are never weakly (sequentially) closed.
Nonetheless, crosses do turn out to be proximinal (i.e., they always admit projections) and 
we provide explicit formulas for the projection onto the cross in all cases. 
\end{abstract}

{\small
\noindent
{\bfseries 2020 Mathematics Subject Classification:}
{Primary 
41A50,
90C26; 
% 47H05, % monotone operators 
% 47H09; % nonexpansive maps
Secondary 
46C05, 
% 47A06, % linear relations
% % 47B44, % linear accretive operators
% 90C25. % the usual
90C33. 
}

\noindent {\bfseries Keywords:}
bilinear constraint, 
complementarity set,
cross,
Hilbert space, 
projection,
switching cone. 
}

\section{Introduction}
Throughout this paper, we assume that 
\begin{empheq}[box=\mybluebox]{equation}
%\begin{equation}
    \text{$X$ is
    a real Hilbert space with inner product
    $\scal{\cdot}{\cdot}\colon X\times X\to\RR$, }
%    \end{equation}
\end{empheq}
and induced norm $\|\cdot\|$.  
Consider the set $C$ defined by 
\begin{empheq}[box=\mybluebox]{equation}
\label{e:defC}
C := \menge{(x,y)\in X\times
X}{\scal{x}{y}=0},
\end{empheq}
where $X\times X$ denotes the product Hilbert space with 
inner product $\scal{(x,y)}{(u,v)} = \scal{x}{u}+\scal{y}{v}$. 
%Because the inner product is continuous, we note that $C$ is closed. 
The set $C$ plays a role in optimization, although it does not seem to have 
a name that is universally used. 
We follow Kruger, Luke, and Thao's convention and refer to $C$ as \emph{cross} 
(see \cite[Example~2(a) on page~291]{KLT}). 
Here is a selection of situations in which crosses are used. 
\begin{itemize}
\item 
When $X=\RR$, then 
\begin{equation}
  C = \menge{(x,y)\in\RR^2}{xy=0}
\end{equation}
is of interest in the study of set regularity and 
feasibility problems; see, e.g., 
Kruger et al.'s \cite[Example~2(a) on page~291]{KLT}. 
\item 
When $X=\RR$, then 
$C$ is also known as the \emph{switching cone} in the study 
of mathematical programs by Liang and Ye; see, e.g., 
\cite[equation~(20) in Section~4]{LY}. 
\item 
When $X=\RR$ and one considers the \emph{(rectangular) hyperbola} defined by 
\begin{equation}
  xy=\alpha,
\end{equation}
where $\alpha>0$, then $C$ arises as the asymptotic case when $\alpha\to 0^+$. 
\item 
The set $C$ is a special case of the more general set 
$C_\gamma := \menge{(x,y)\in X\times X}{\scal{x}{y}=\gamma}$, where 
$X$ is finite-dimensional and $\gamma\in\RR$ is fixed: indeed, $C=C_0$. 
Sets of the form $C_\gamma$ are considered in nonnegative matrix factorization and also 
in deep learning --- see \cite[Sections~4.1--4.2 and Appendix~B]{Elser}. 
In that paper, Elser discusses also certain (but not all) cases of projecting onto $C_\gamma$ 
and computing projections numerically. He refers to $C_\gamma$ as a \emph{bilinear constraint} set. 
\item The study of conic optimization problems may lead to the set $C$.
If $K$ is a nonempty closed cone in $X$ and $K^\oplus$ is its dual cone, 
then the set 
\begin{equation}
  C \cap (K\times K^\oplus)
\end{equation}
is called the \emph{conic complementarity set}; 
see, e.g., Busseti, Moursi, and Boyd's \cite[Section~2]{BMB}.
\end{itemize}

In \cite{Elser}, the author is particularly interested in the 
\emph{projection} (nearest point mapping) associated with the cross in the context
of algorithms. 

Classically, the most famous condition 
\emph{sufficient for uniqueness of the projection} for a closed set
is \emph{convexity}.
Unfortunately, the cross $C$ is far from being convex: 

\begin{lemma}[\bf convex hull of $C$]
We have $\conv C = X\times X$.
\end{lemma}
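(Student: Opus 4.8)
The plan is to establish the two inclusions $\conv C \subseteq X\times X$ and $X\times X \subseteq \conv C$ separately. Since $X\times X$ is itself convex and contains $C$, the first inclusion is immediate, so the entire content of the statement lies in the reverse direction.

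For the reverse inclusion, the observation I would isolate first is that $C$ contains both coordinate axes of $X\times X$. Indeed, for every $x\in X$ we have $\scal{x}{0}=0$, whence $(x,0)\in C$; symmetrically, $\scal{0}{y}=0$ gives $(0,y)\in C$ for every $y\in X$. Thus the horizontal axis $X\times\{0\}$ and the vertical axis $\{0\}\times X$ are both subsets of $C$.

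With this in hand, I would show that an arbitrary point $(u,v)\in X\times X$ is already a convex combination of two points of $C$. The natural choice is to realize $(u,v)$ as the midpoint of the rescaled axis points $(2u,0)$ and $(0,2v)$: we have $\tfrac{1}{2}(2u,0)+\tfrac{1}{2}(0,2v)=(u,v)$, and both $(2u,0)$ and $(0,2v)$ lie in $C$ by the previous step. Hence $(u,v)\in\conv C$, and since $(u,v)$ was arbitrary, $X\times X\subseteq\conv C$. Combining the two inclusions yields $\conv C=X\times X$.

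I do not anticipate a genuine obstacle here; the only point that needs care is the scaling factor $2$, chosen precisely so that the midpoint lands exactly on $(u,v)$. It is worth noting that the argument delivers slightly more than the bare statement: every point of $X\times X$ is a convex combination of just \emph{two} elements of $C$ (a Carath\'eodory number of two), and no passage to a closure is required, which is what makes the failure of convexity of $C$ so dramatic.
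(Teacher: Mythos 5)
Your proposal is correct and follows essentially the same route as the paper: observe that both axes $X\times\{0\}$ and $\{0\}\times X$ lie in $C$, then write an arbitrary $(u,v)$ as the midpoint $\tfrac12(2u,0)+\tfrac12(0,2v)$ of two points of $C$. The only difference is that you make the trivial inclusion $\conv C\subseteq X\times X$ and the Carath\'eodory-number remark explicit, which the paper leaves unsaid.
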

\begin{proof}
Clearly, $X\times\{0\}\subseteq C$ and 
$\{0\}\times X\subseteq C$.
Let $(x,y)\in X\times X$.
Then $(2x,0)\in C$ and $(0,2y)\in C$;
hence, $(x,y)=\thalb(2x,0)+\thalb(0,2y)\in\conv C$
and we are done.
\end{proof}

Because $C$ is not convex, the question arises whether 
$C$ at least admits projections everywhere, i.e., 
whether $C$ is \emph{proximinal}. 

An often employed condition \emph{sufficient} for the existence of projections is 
weak closedness of the set (see, e.g., \cite[Proposition~3.14]{BC2017}). 
When $X$ is finite-dimensional, weak closedness \emph{characterizes} proximinality and 
is of course also equivalent to ordinary closedness
(see, e.g., \cite[Corollary~3.15]{BC2017}).
Unfortunately, when $X$ is infinite-dimensional, 
then $C$ is far from being weakly closed as the next result illustrates: 

\begin{lemma}[\bf weak (sequential) closure of $C$]
The set $C$ is closed. 
If $X$ is finite-dimensional, then $C$ is proximinal. 
If $X$ is infinite-dimensional, 
then $C$ is not weakly (sequentially) closed;
in fact, the weak (sequential) closure of $C$ is equal to $X\times X$. 
\end{lemma}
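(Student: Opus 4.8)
The plan is to treat the three assertions in turn, the first two being routine and the third carrying the real content. For closedness, I would observe that the bilinear form $(x,y)\mapsto\scal{x}{y}$ is norm-continuous on $X\times X$ (by Cauchy--Schwarz), so that $C$, being the preimage of the closed singleton $\{0\}$ under this map, is closed. For proximinality in the finite-dimensional case, since $C$ is closed and nonempty (it contains $(0,0)$) while $X\times X$ is finite-dimensional, I would simply invoke the standard fact that closed subsets of Euclidean space are proximinal, equivalently \cite[Corollary~3.15]{BC2017}; concretely, for any target one minimizes the continuous distance function over the intersection of $C$ with a sufficiently large closed ball, which is compact.

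The heart of the proof is the infinite-dimensional statement, and here the plan is to show directly that \emph{every} point $(u,v)\in X\times X$ is a weak sequential limit of points of $C$. The key tool is that, since $X$ is infinite-dimensional, the closed subspace $\{u,v\}^\perp$ has codimension at most two and is therefore still infinite-dimensional; hence it contains an orthonormal sequence $(e_n)_{\nnn}$, and any orthonormal sequence converges weakly to $0$ (by Bessel's inequality). Writing $a:=\scal{u}{v}$, I would then set
\[
x_n := u + e_n \qquad\text{and}\qquad y_n := v - a\,e_n .
\]
Because $e_n\perp u$ and $e_n\perp v$, a one-line expansion using $\scal{u}{e_n}=\scal{v}{e_n}=0$ and $\scal{e_n}{e_n}=1$ gives $\scal{x_n}{y_n}=a-a=0$, so $(x_n,y_n)\in C$ for every $n$. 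At the same time $x_n-u=e_n\weak 0$ and $y_n-v=-a\,e_n\weak 0$ (multiplying a weakly null sequence by the fixed scalar $-a$ preserves weak nullity), whence $(x_n,y_n)\weak(u,v)$ in $X\times X$.

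This shows that the weak sequential closure of $C$ contains $X\times X$, and since it is trivially contained in $X\times X$, the two coincide. In particular, choosing any $w$ with $\|w\|=1$ gives $(w,w)\notin C$ (as $\scal{w}{w}=1\neq 0$) yet exhibits $(w,w)$ as a weak sequential limit of points of $C$, so $C$ is not weakly sequentially closed; a fortiori it is not weakly closed, and its weak closure is likewise all of $X\times X$.

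I do not expect a serious obstacle once the construction is in hand. The only points requiring care are the existence of the weakly null orthonormal sequence lying in $\{u,v\}^\perp$ — which is precisely where infinite-dimensionality is used — and the elementary verification that scalar multiples of a weakly null sequence remain weakly null; everything else is a direct computation.
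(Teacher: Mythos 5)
Your proposal is correct, and while it rests on the same core device as the paper's proof --- perturbing a point by multiples of a weakly null orthonormal sequence so as to annihilate the inner product --- it executes the reduction to an arbitrary point $(u,v)$ differently, and more directly. The paper first fixes an orthonormal family spanning a dense subspace, proves that the perturbation works for points $(x,y)$ in the \emph{algebraic} span $S_J$ (where $e_n\perp\{x,y\}$ holds only for $n$ large), and then must pass from $S_J\times S_J$ to all of $X\times X$ via a density-plus-convexity argument invoking \cite[Theorem~3.34]{BC2017} to show the norm closure $\overline{S_J}\times\overline{S_J}$ still lies in the weak sequential closure of $C$. You instead choose the orthonormal sequence inside $\{u,v\}^\perp$ from the outset --- legitimate because that closed subspace has codimension at most $2$ and is therefore infinite-dimensional --- so that $\scal{x_n}{y_n}=a-a=0$ holds for \emph{every} $n$ and the weak convergence $(x_n,y_n)\weakly(u,v)$ is immediate; no closure step is needed. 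What your route buys is a shorter, self-contained argument using only Bessel's inequality and Gram--Schmidt; what the paper's route buys is that it never needs to locate an orthonormal sequence adapted to the particular target point, working instead with one fixed family. Both are sound; yours is arguably the cleaner of the two.
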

\begin{proof}
The continuity of the inner product immediately yields the closedness of $C$. 
So we assume that $X$ is infinite-dimensional and we 
let $(u_i)_{i\in I}$ be an orthonormal family in $X$ 
such that 
$\cspan \{u_i\}_{i\in I}= X$.
Next, let $J$ be a countably infinite subset of $I$, say
$J = \{i_n\}_{\nnn}$ for some sequence 
$(i_n)_\nnn$ in $I$ with pairwise distinct terms.
Set $(\forall \nnn)$ $e_n := u_{i_n}$. 
Then $e_n\weakly 0$ 
 by \cite[Example~2.32]{BC2017}. 
Set $S_J := \spn\{u_j\}_{j\in J} = \spn\{e_n\}_\nnn$, 
and let $(x,y)\in S_J\times S_J$. 
Now set 
$\zeta := \scal{x}{y}$,  
and 
$(\forall\nnn)$
$(x_n,y_n) := (x+\zeta e_n,y- e_n)$.
Then there exists $N\in\NN$ such that for all $n\geq N$, 
we have $e_n \in \{x,y\}^\perp$ and thus 
\begin{equation}
\scal{x_n}{y_n}
=\scal{x+\zeta e_n}{y-e_n}
=\scal{x}{y}-\zeta\scal{e_n}{e_n}=0. 
\end{equation}
Consequently, the sequence $(x_n,y_n)_{n\geq N}$ lies in $C$. 
On the other hand, $(x_n,y_n)\weakly (x,y)$. 
Altogether, 
$S_J\times S_J$ lies in the weak sequential closure of $C$.
Because $S_J\times S_J$ is convex, 
it follows from \cite[Theorem~3.34]{BC2017}
that its weak sequential closure
$\overline{S_J} \times \overline{S_J}$
is also contained in the weak sequential closure of $C$.
This is true for every countably infinite subset $J$ of $I$.
Finally, let $(x,y)\in X\times X$. 
Then there exists a countably infinite subset $J$ of $I$
such that $(x,y)\in \overline{S_J} \times \overline{S_J}$. 
Therefore,
the weak (sequential) closure of $C$ is equal to $X\times X$.
In particular, $C$ is neither weakly sequentially closed nor weakly closed. 
\end{proof}

After these explanations, we are now ready to state the main contribution of this paper: 
\emph{We will show that $C$ is in fact proximinal and we will also provide an 
explicit formula for the projection onto $C$.} 

The remainder of the paper is organized as follows. 
In \cref{sec:known}, we collect a few results that are known but will help in subsequent 
sections. Various auxiliary results are obtained in \cref{sec:aux} to make 
the proof of the main result painless. 
The main result (\cref{t:main}) is proved in \cref{sec:main}. 
The notation we employ is standard and follows
largely \cite{BC2017}.  
 
\section{Known results}
\label{sec:known}
%Record Bertsekas facts here. Do they work in Hilbert space? (What about more general differentiability assumptions?)
% Consider a convex function $f:X\to \RR$ and a non-convex function $h:X\to \RR$ that defines the equality constraint, where both functions are twice continuously differentiable. Note that we confine ourselves to the finite-dimensional setting just for simplicity, most of the results obtained in the paper can be extended to infinite dimensions
% using the tools of infinite-dimensional variational analysis and generalized differentiation presented in \hl{Boris or Luenberger(no mention of nonconvex case)} books?
% \begin{definition}
% $x^*$ is called a strict local minimizer of $f(x)$ subject to a constraint set $C=\menge{x \in X}{h(x)=0}$ if $x^* \in C$ and there is an $\epsilon >0$ such that $f(x^*)< f(x)$ is true $\forall x \in C$ with $\norm{x-x^{*}} < \epsilon$.
% \end{definition}

In this section, we record some results which will make the 
proofs given in subsequent sections more clear.

\begin{fact}
\label{f:Bert1}
Let $f\colon X\to\RR$ and $h\colon X\to\RR$ be continuously Fr\'echet differentiable.
Consider the problem that asks to 
\begin{equation} 
\label{e:210805a}
\text{minimize}\;\; f(x)\;\; \text{subject to}\;\; h(x)=0.
\end{equation}
If $x^*\in X$ is a local minimizer of \cref{e:210805a} and 
$\nabla h(x^*)\neq 0$, 
then there exists a unique $\lambda^*\in \RR$ such that 
\begin{equation}
\label{necessary}
\nabla f(x^*) + \lambda \nabla h(x^*) = 0.
\end{equation}
\end{fact}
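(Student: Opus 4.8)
The statement to prove is the classical Lagrange multiplier rule (first-order necessary condition) for a single equality constraint. This is \Cref{f:Bert1}, a \textbf{Fact}, so it is cited rather than proved in the paper, but here is how I would establish it.

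\textbf{Plan.} The standard approach is to show that the gradient $\nabla f(x^*)$ lies in the span of $\nabla h(x^*)$, equivalently that $\nabla f(x^*)$ is orthogonal to the tangent space of the constraint manifold at $x^*$. The key tool is the implicit function theorem, whose applicability is guaranteed precisely by the hypothesis $\nabla h(x^*)\neq 0$ (this is the nondegeneracy/constraint-qualification condition). First I would argue that for any admissible direction $d$, meaning $d$ with $\scal{\nabla h(x^*)}{d}=0$, there is a feasible curve through $x^*$ tangent to $d$; then local minimality forces $\scal{\nabla f(x^*)}{d}=0$. Together these give that $\nabla f(x^*)$ annihilates the orthogonal complement of $\nabla h(x^*)$, hence $\nabla f(x^*)\in\RR\,\nabla h(x^*)$, which is exactly the existence of $\lambda^*$.

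\textbf{Key steps.} First, fix a direction $d\in X$ with $\scal{\nabla h(x^*)}{d}=0$. Second, using $\nabla h(x^*)\neq 0$ and the implicit function theorem, construct a $C^1$ curve $t\mapsto x(t)$ with $x(0)=x^*$, $x'(0)=d$, and $h(x(t))=0$ for all small $t$; the construction corrects the naive curve $x^*+td$ by a term along $\nabla h(x^*)$ chosen to stay on the constraint set, and the nonvanishing gradient makes this correction solvable and smooth. Third, since $x^*$ is a local minimizer and $x(t)$ is feasible, the scalar function $t\mapsto f(x(t))$ has a local minimum at $t=0$, so its derivative vanishes: $\scal{\nabla f(x^*)}{d}=0$. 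Fourth, since $d$ was an arbitrary element of $\{\nabla h(x^*)\}^\perp$, conclude $\nabla f(x^*)\in(\{\nabla h(x^*)\}^\perp)^\perp=\RR\,\nabla h(x^*)$, giving $\nabla f(x^*)=-\lambda^*\nabla h(x^*)$ for some scalar $\lambda^*$. Finally, uniqueness of $\lambda^*$ follows immediately because $\nabla h(x^*)\neq 0$: if $\lambda_1^*\nabla h(x^*)=\lambda_2^*\nabla h(x^*)$ then $(\lambda_1^*-\lambda_2^*)\nabla h(x^*)=0$ forces $\lambda_1^*=\lambda_2^*$.

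\textbf{Main obstacle.} The delicate point is the second step: constructing the feasible tangent curve in a general (possibly infinite-dimensional) Hilbert space. One cannot simply invoke the finite-dimensional implicit function theorem, so I would either split $X=\RR\,\nabla h(x^*)\oplus\{\nabla h(x^*)\}^\perp$ and solve the scalar equation $h(x^*+td+s\nabla h(x^*))=0$ for $s=s(t)$ via the one-variable implicit function theorem (here $\partial_s h$ at the origin equals $\norm{\nabla h(x^*)}^2\neq 0$, which is exactly where the hypothesis is used), or invoke the Banach-space implicit/surjective-mapping theorem directly. Everything else is routine: the chain rule, the vanishing-derivative condition at an interior minimizer of a one-variable function, and the elementary orthogonality argument. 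It is worth emphasizing that the nondegeneracy assumption $\nabla h(x^*)\neq 0$ is indispensable and enters in two places, namely to make the implicit-function construction go through and to secure uniqueness of the multiplier.
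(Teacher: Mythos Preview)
Your proposal is correct, and you anticipated the paper's treatment exactly: the paper does not prove \cref{f:Bert1} but simply cites \cite[Proposition~4.1.1]{Bertsekas} for the finite-dimensional case and \cite[Theorem~9.3.1]{Luenberger} for the infinite-dimensional case. Your sketch via the implicit function theorem---solving $h(x^*+td+s\nabla h(x^*))=0$ for $s(t)$ using $\partial_s h|_{(0,0)}=\|\nabla h(x^*)\|^2\neq 0$, then differentiating $t\mapsto f(x(t))$ at the local minimum---is precisely the standard argument behind those references, so there is nothing to add.
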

\begin{proof}
When $X$ is finite-dimensional, then this follows from 
\cite[Proposition~4.1.1]{Bertsekas}.
If $X$ is infinite-dimensional, then use \cite[Theorem~9.3.1 on page~243]{Luenberger}. 
\end{proof}

\begin{lemma}
\label{l:2x2}
If $\lambda\in\RR\smallsetminus\{-1,1\}$, then 
\begin{equation}
\begin{pmatrix}
\Id & \lambda\Id \\
\lambda\Id & \Id
\end{pmatrix}^{-1} = 
\frac{1}{1-\lambda^2}
\begin{pmatrix}
\Id & -\lambda\Id \\
-\lambda\Id & \Id
\end{pmatrix}, 
\end{equation}
where the block matrices are interpreted as linear operators on $X\times X$.
\end{lemma}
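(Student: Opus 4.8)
The identity is purely computational, so the plan is to exhibit the candidate matrix as a genuine two-sided inverse by direct block multiplication. Write
\[
M := \begin{pmatrix} \Id & \lambda\Id \\ \lambda\Id & \Id \end{pmatrix}, \qquad N := \frac{1}{1-\lambda^2}\begin{pmatrix} \Id & -\lambda\Id \\ -\lambda\Id & \Id \end{pmatrix}.
\]
The hypothesis $\lambda\in\RR\smallsetminus\{-1,1\}$ guarantees that $1-\lambda^2\neq 0$, so the scalar factor $\tfrac{1}{1-\lambda^2}$ is well defined and $N$ is a bounded linear operator on $X\times X$. It then remains only to check that $MN = NM = \Id_{X\times X}$.

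The one point worth noting is that every block of $M$ and of $N$ is a real scalar multiple of $\Id$. Such operators commute with one another, and the assignment $a\Id\mapsto a$ identifies the algebra of $2\times 2$ block matrices whose entries are scalar multiples of $\Id$ with the ordinary matrix algebra $M_2(\RR)$, in a way that respects products. Consequently, the block multiplication here proceeds exactly as multiplication of $2\times 2$ real matrices, and no subtlety arising from the (possibly infinite) dimension of $X$ intervenes.

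Carrying out the product, I would compute the four blocks of $MN$: each diagonal block reduces to $\Id-\lambda^2\Id=(1-\lambda^2)\Id$, while each off-diagonal block reduces to $\lambda\Id-\lambda\Id=0$. Multiplying by the scalar $\tfrac{1}{1-\lambda^2}$ then yields $\begin{pmatrix}\Id & 0\\ 0 & \Id\end{pmatrix}$, the identity on $X\times X$. Since $M$ and $N$ are symmetric and their entries commute, the computation of $NM$ is identical, so $NM=\Id_{X\times X}$ as well; alternatively, under the $M_2(\RR)$ identification $M$ corresponds to $\begin{pmatrix}1 & \lambda\\ \lambda & 1\end{pmatrix}$, whose determinant is $1-\lambda^2\neq 0$, and $N$ to its classical inverse. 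There is essentially no obstacle beyond keeping the block arithmetic straight; the only substantive hypothesis used is exactly the invertibility condition $1-\lambda^2\neq 0$.
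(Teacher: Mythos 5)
Your proof is correct and matches the paper's approach: the paper simply states that the result follows by direct verification, and you have carried out exactly that verification, with the (sound) extra observation that scalar-multiple-of-identity blocks commute so the block arithmetic reduces to $2\times 2$ real matrix algebra.
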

\begin{proof}
The result follows by a direct verification.
\end{proof}

We denote by 
\begin{equation}
\myS:=\menge{x\in X}{\|x\|=1}
\end{equation}
the \emph{unit sphere} in $X$. 
The next result provides a parametrization of the unit sphere in $\RR^{n}$. 
(See also \cite{Blumenson}.)

\begin{fact}[\bf parametrization of the sphere]
  \label{f:para}
Suppose that $X=\RR^n$, where $n\geq 2$, and let $\rho>0$. 
Then every point
\begin{equation}
x=(x_1,x_2,\ldots,x_n)\in\myS \subseteq\RR^n
\end{equation}
is uniquely described by its \emph{spherical coordinates}
$\theta_1,\ldots,\theta_{n-2}$ in $\left[0,\pi\right]$ 
and $\theta_{n-1}\in\left[0,2\pi\right[$ via\footnote{Recall the empty product convention 
which sets such products equal to $1$.}
\begin{equation}
(\forall i\in\{1,2,\ldots,n\})\quad 
x_i = \begin{cases}
\rho \cos(\theta_i)\prod_{j=1}^{i-1}\sin(\theta_j), 
&\text{if $i\leq n-2$;}\\
\rho \cos(\theta_{n-1})\prod_{j=1}^{n-2}\sin(\theta_j), 
&\text{if $i= n-1$;}\\
\rho \sin(\theta_{n-1})\prod_{j=1}^{n-2}\sin(\theta_j), 
&\text{if $i= n$.}
\end{cases}
\end{equation}
\end{fact}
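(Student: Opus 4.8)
The plan is to argue by induction on the dimension $n$, peeling off one coordinate at a time and reducing to a sphere of smaller dimension and smaller radius. For the base case $n=2$ the claim is exactly the standard polar-coordinate description of the circle $\{(x_1,x_2)\mid x_1^2+x_2^2=\rho^2\}$: taking $\theta_1\in[0,2\pi[$ to be the angle of $(x_1,x_2)$ gives $x_1=\rho\cos\theta_1$ and $x_2=\rho\sin\theta_1$ (here the product $\prod_{j=1}^{n-2}$ is empty and equals $1$), and this angle is uniquely determined. This is classical and I would simply invoke it, citing \cite{Blumenson} if desired.

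For the inductive step, I would assume the statement for $\RR^{n-1}$ and take $x=(x_1,\ldots,x_n)$ with $\norm{x}=\rho$. Since $|x_1|\le\rho$, there is a unique $\theta_1\in[0,\pi]$ with $\cos\theta_1=x_1/\rho$; then $x_1=\rho\cos\theta_1$ and $\sin\theta_1=\sqrt{1-(x_1/\rho)^2}\ge 0$. The remaining block $(x_2,\ldots,x_n)$ satisfies
\begin{equation}
x_2^2+\cdots+x_n^2=\rho^2-x_1^2=(\rho\sin\theta_1)^2,
\end{equation}
so it lies on the sphere of radius $\rho':=\rho\sin\theta_1$ in $\RR^{n-1}$. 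Applying the inductive hypothesis to $(x_2,\ldots,x_n)$ yields angles $\phi_1,\ldots,\phi_{n-2}$, and relabelling $\theta_i:=\phi_{i-1}$ for $i\in\{2,\ldots,n-1\}$ converts the $\RR^{n-1}$ formulas, each carrying the prefactor $\rho'=\rho\sin\theta_1$, into exactly the claimed $\RR^n$ formulas; the bookkeeping is a routine shift of indices inside the products $\prod_j\sin\theta_j$. I would then check separately, by a telescoping computation of $\sum_{i=1}^n x_i^2$, that these formulas land back on the sphere of radius $\rho$, which closes the existence half of the induction.

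The main obstacle is the \emph{uniqueness} assertion, which is only valid away from the coordinate singularities. Indeed, $\theta_1$ is pinned down by $x_1$ precisely when $\sin\theta_1>0$, i.e.\ $x_1\neq\pm\rho$; at the two poles $x=(\pm\rho,0,\ldots,0)$ one has $\rho'=0$ and the inductive hypothesis no longer applies, so the angles $\theta_2,\ldots,\theta_{n-1}$ are left undetermined. The same degeneracy recurs whenever a partial product $\prod_{j\le k}\sin\theta_j$ vanishes. Thus the honest reading is that the parametrization is surjective onto the sphere and that the angles are uniquely determined at every point for which all the relevant sines are strictly positive; I would make this caveat explicit (or restrict to that open dense subset), since it is exactly where a naive induction would overclaim.
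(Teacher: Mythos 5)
Your induction is sound and is essentially the standard derivation of $n$-dimensional spherical coordinates; the paper itself offers no proof of this Fact at all, deferring entirely to the cited reference \cite{Blumenson}, which proceeds along the same recursive lines (peel off $x_1$, reduce to a sphere of radius $\rho\sin\theta_1$ in $\RR^{n-1}$). So there is nothing to compare at the level of technique. Two remarks are in order. First, you are right that the uniqueness assertion, read literally, fails at the degenerate points: at $x=(\pm\rho,0,\dots,0)$ the angle $\theta_1\in\{0,\pi\}$ is forced but $\theta_2,\dots,\theta_{n-1}$ are completely free, and the same degeneracy recurs whenever a partial product $\prod_{j\leq k}\sin\theta_j$ vanishes. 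This is an imprecision in the statement, not a gap in your argument; since the Fact is invoked in the paper only in a remark, as a device for listing the elements of $P_C(x_0,y_0)$ in the multi-valued case, surjectivity of the parametrization is all that is actually used, so the overclaim is harmless in context --- but your caveat (restricting uniqueness to points where all relevant sines are positive) is the correct reading. Second, note a small mismatch in the statement itself: $\myS$ is defined as the \emph{unit} sphere, yet the formulas describe the sphere of radius $\rho$; one should either take $\rho=1$ or replace $\myS$ by $\rho\,\myS$. Your proof, which works throughout with the sphere of radius $\rho$, implicitly adopts the intended reading.
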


\section{Auxiliary results}
\label{sec:aux}

This section lays out the preparatory work for our main result. 

We consider a point $(x_0,y_0)\in X\times X$. 
Recall our aim, which is to compute $P_C(x_0,y_0)$.
The following result will be useful later. 

\begin{lemma}
\label{l:Calt}
We have 
\begin{equation}
\label{e:201102a}
C = \bigcup_{\text{$U$ is a closed linear subspace of $X$}} U \times U^\perp;
\end{equation}
moreover, 
\begin{equation}
\label{e:201102b}
C = \bigcup_{\text{$U$ is a linear subspace of $X$  with $\dim U\leq 1$}} U \times U^\perp.
\end{equation}
Consequently, if $(x_0,y_0)\in X\times X$, then 
\begin{equation}
\label{e:reduc}
P_C(x_0,y_0) \subseteq 
\bigcup_{\text{$U$ is a closed linear subspace of $X$}} \big\{(P_Ux_0,P_{U^\perp}y_0)\big\}. 
\end{equation}
\end{lemma}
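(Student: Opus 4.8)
The plan is to prove the two set identities by establishing \cref{e:201102b} directly and then obtaining \cref{e:201102a} essentially for free, since every linear subspace of dimension at most $1$ is in particular a closed linear subspace. For the inclusion $\supseteq$ in both identities, note that if $U$ is any (closed) linear subspace, $x\in U$, and $y\in U^\perp$, then $\scal{x}{y}=0$ by the definition of the orthogonal complement, so $(x,y)\in C$.

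For the reverse inclusion $C\subseteq\bigcup_{\dim U\le 1}U\times U^\perp$, I would take $(x,y)\in C$ and exhibit a witnessing subspace of dimension at most $1$. If $x=0$, then $U:=\{0\}$ works, since $x\in U$ and $y\in X=U^\perp$. If $x\neq 0$, then I would set $U:=\spn\{x\}$, a one-dimensional, hence closed, subspace containing $x$; because $\scal{y}{x}=\scal{x}{y}=0$, we have $y\in U^\perp$, so $(x,y)\in U\times U^\perp$. Chaining the inclusions yields $C\subseteq\bigcup_{\dim U\le 1}U\times U^\perp\subseteq\bigcup_{U\text{ closed}}U\times U^\perp\subseteq C$, which proves both \cref{e:201102a,e:201102b} simultaneously.

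To obtain the reduction \cref{e:reduc}, I would fix $(x_0,y_0)$, let $(p,q)\in P_C(x_0,y_0)$ be any projection, and use \cref{e:201102a} to choose a closed linear subspace $U$ with $(p,q)\in U\times U^\perp$. The key observation is that $(p,q)$ minimizes the separable objective $(x,y)\mapsto\|x_0-x\|^2+\|y_0-y\|^2$ over all of $C$, and hence in particular over the subset $U\times U^\perp\subseteq C$; since $(p,q)$ itself lies in this subset, it is a minimizer over $U\times U^\perp$ as well. Because the objective splits as a sum over the two coordinates and $U\times U^\perp$ is a product, the minimization decouples and forces $p=P_Ux_0$ and $q=P_{U^\perp}y_0$, these being the unique nearest points in the closed subspaces $U$ and $U^\perp$. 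Therefore $(p,q)=(P_Ux_0,P_{U^\perp}y_0)$ belongs to the right-hand side of \cref{e:reduc}.

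I do not anticipate a serious obstacle here. The only step requiring care is the logical point underlying \cref{e:reduc}: a global minimizer over $C$ that happens to lie in the subset $U\times U^\perp$ is automatically a minimizer over that subset, and only then do the separability of the squared distance and the standard uniqueness of the projection onto a closed subspace pin down the two coordinates. The rest is the routine verification of the two inclusions, where the one-dimensional witness $\spn\{x\}$ does all the work.
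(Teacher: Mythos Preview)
Your proof is correct and follows essentially the same approach as the paper: both use the one-dimensional witness $U=\spn\{x\}$ (the paper writes $U=\RR x$, which absorbs your $x=0$ case) to show $C\subseteq\bigcup_{\dim U\le 1}U\times U^\perp$, and the reverse inclusion is immediate from the definition of $U^\perp$. For \cref{e:reduc}, the paper simply asserts that it ``follows from \cref{e:201102a}''; your version spells out the underlying argument---that a global minimizer lying in $U\times U^\perp$ is a minimizer over that product, and separability plus uniqueness of projections onto closed subspaces pin down the coordinates---which is exactly the intended reasoning.
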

\begin{proof}
Let $(x,y) \in X\times X$.
If $(x,y)\in C$, i.e., $x \perp y$, 
 and we set $U=\RR x$, then 
$(x,y)\in U\times U^\perp$ and 
$U$ is a (closed) linear 
subspace of $X$ with $\dim U\leq 1$. 
Conversely, if $U$ is a closed linear subspace 
of $X$ and $(x,y)\in U\times U^\perp$, then 
$x\perp y$ and so $(x,y)\in C$. 
Altogether, we have verified 
\cref{e:201102a} and \cref{e:201102b}. 
The ``Consequently'' part now follows from \cref{e:201102a}.
\end{proof}

The following example illustrates that the inclusion 
\cref{e:reduc} may be strict.

\begin{example}
Suppose that $X=\RR$, and set $(x_0,y_0) := (1,0)$ 
and $U:=\{0\}$. 
Then $(x_0,y_0)\in C$ and thus 
$P_C(x_0,y_0)=\{(x_0,y_0)\} = \{(1,0)\}$. 
However, $U^\perp=\RR$ and 
$(P_Ux_0,P_{U^\perp}y_0) = (0,0)\notin P_C(x_0,y_0)$.  
\end{example}

We also note that 
when $(x_0,y_0)\in C$, then\footnote{
We should technically write $P_C(x_0,y_0)=\{(x_0,y_0)\}$; 
however, for convenience and readability, we will 
identify singleton sets with the vectors they contain.
}
$P_C(x_0,y_0)=(x_0,y_0)$.
Thus, for the remainder of this section, we focus on the case when $(x_0,y_0)\notin C$, i.e., 
\begin{empheq}[box=\mybluebox]{equation}
\label{e:210805b}
%(x_0,y_0)\in X\times X\smallsetminus C, \;\;\text{i.e.,}\;\;
\scal{x_0}{y_0}\neq 0; 
\;\;\text{consequently,}\;\;
x_0\neq 0 \;\;\text{and}\;\; y_0\neq 0.
\end{empheq}
To determine $P_C(x_0,y_0)$, 
we introduce the objective and constraint functions 
\begin{empheq}[box=\mybluebox]{equation}
\label{eq:obj}
f(x,y) := \thalb\|x-x_0\|^2 + \thalb\|y-y_0\|^2
\quad\text{and}\quad
h(x,y) := \scal{x}{y},
\end{empheq}
which are obviously continuously Fr\'echet differentiable on $X\times X$. 
Indeed, for every $(x,y)\in X\times X$, we have 
\begin{equation}
\label{e:gradients}
\nabla f(x,y) = (x-x_0,y-y_0)
\quad\text{and}\quad
\nabla h(x,y) = (y,x). 
\end{equation}
Moreover, the points in $P_C(x_0,y_0)$ are precisely the 
solutions to the following optimization problem:
\begin{empheq}[box=\mybluebox]{equation}
\label{problem}
\text{minimize} \quad f(x,y) 
\quad\text{subject to} \quad h(x,y) = 0. 
\end{empheq}
Indeed, $C=\menge{(x,y)\in X\times X}{h(x,y)=0}$
while the optimal value of \cref{problem} is 
$\thalb d_C^2(x_0,y_0)=\inf_{(x,y)\in C}\thalb\|(x,y)-(x_0,y_0)\|^2$.

\begin{proposition}
\label{p:201101a}
$(0,0)$ is never a solution to \cref{problem}.
Consequently, $P_C^{-1}(0,0)=\{(0,0)\}$. 
\end{proposition}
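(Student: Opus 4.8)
The plan is to defeat the candidate minimizer $(0,0)$ by exhibiting a strictly better feasible point, and then to read off the preimage statement by a short case analysis.

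First I would observe that the standing assumption \cref{e:210805b} guarantees $x_0\neq 0$. The natural competitor is the feasible point $(x_0,0)$: since $\scal{x_0}{0}=0$, we have $(x_0,0)\in C$, and a direct computation gives $f(x_0,0)=\thalb\norm{y_0}^2$, whereas $f(0,0)=\thalb\norm{x_0}^2+\thalb\norm{y_0}^2$. Because $\norm{x_0}^2>0$, it follows that $f(x_0,0)<f(0,0)$, so $(0,0)$ cannot be a (global) minimizer of \cref{problem}. This settles the first assertion. (The symmetric choice $(0,y_0)$ would serve equally well.)

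For the ``Consequently'' part, I would recall that the points of $P_C(x_0,y_0)$ are exactly the solutions to \cref{problem}, and argue about the full preimage $P_C^{-1}(0,0)$, i.e., the set of all $(a,b)\in X\times X$ with $(0,0)\in P_C(a,b)$. The inclusion $\{(0,0)\}\subseteq P_C^{-1}(0,0)$ is immediate because $(0,0)\in C$ forces $P_C(0,0)=(0,0)$. For the reverse inclusion I would fix $(a,b)$ with $(0,0)\in P_C(a,b)$ and split into two cases. If $(a,b)\in C$, then the projection is the point itself, so $(0,0)=(a,b)$. If $(a,b)\notin C$, then $\scal{a}{b}\neq 0$, and applying the first assertion with $(x_0,y_0)$ replaced by $(a,b)$ shows that $(0,0)$ is not a solution to the corresponding instance of \cref{problem}, i.e., $(0,0)\notin P_C(a,b)$, a contradiction. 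Hence $(a,b)=(0,0)$, completing the argument.

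I do not anticipate a genuine obstacle here; the entire content lies in choosing the right competitor. The only point requiring a little care is the bookkeeping in the ``Consequently'' step: one must remember that $P_C^{-1}(0,0)$ ranges over \emph{all} base points, so the first assertion has to be invoked for a generic $(a,b)\notin C$ rather than only for the fixed $(x_0,y_0)$, which is legitimate since the proof of the first assertion uses only $x_0\neq 0$ (equivalently, $\scal{a}{b}\neq 0$).
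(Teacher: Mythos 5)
Your proof is correct and follows essentially the same route as the paper: exhibit the feasible competitor $(x_0,0)\in C$ and conclude that $(0,0)$ cannot be optimal by strict inequality of the objective, then read off the preimage statement. (In fact your computation $f(x_0,0)=\thalb\|y_0\|^2$ is the accurate one---the paper's proof contains a small slip writing $\thalb\|x_0\|^2$ at that point, though the conclusion is unaffected since both $x_0\neq 0$ and $y_0\neq 0$---and your spelled-out case analysis for the ``Consequently'' part is exactly what the paper leaves implicit.)
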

\begin{proof}
Suppose to the contrary that $(0,0)$ solves \cref{problem}.
Then the optimal value of \cref{problem} is 
$\thalb\|x_0\|^2 + \thalb\|y_0\|^2$
and both terms in this sum are positive (by \cref{e:210805b}). 
Note that $(x_0,0)\in C$. 
But 
$f(x_0,0) = \thalb\|x_0\|^2 < 
\thalb\|x_0\|^2 + \thalb\|y_0\|^2 = f(0,0)$
because $y_0\neq 0$. 
But this contradicts the minimality of $f(0,0)$. 
Hence $(0,0)$ cannot be optimal. 
The ``Consequently'' part follows. 
\end{proof}

\begin{corollary}
\label{c:201101a}
Suppose that $(x,y)\in X\times X$ solves \cref{problem}. 
Then $\nabla h(x,y) \neq (0,0)$. 
\end{corollary}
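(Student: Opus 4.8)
The plan is to read off the gradient of the constraint from \cref{e:gradients} and then argue by contraposition against \cref{p:201101a}. Recall that for every $(x,y)\in X\times X$ we have $\nabla h(x,y) = (y,x)$. Hence the equation $\nabla h(x,y) = (0,0)$ in $X\times X$ is equivalent to the pair of conditions $y=0$ and $x=0$, i.e., to $(x,y)=(0,0)$. So the statement $\nabla h(x,y)\neq(0,0)$ is nothing but the statement $(x,y)\neq(0,0)$.

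With this reformulation in hand, the corollary becomes an immediate consequence of the preceding proposition. First I would suppose, toward a contradiction (or simply by contraposition), that a solution $(x,y)$ of \cref{problem} satisfies $\nabla h(x,y)=(0,0)$. By the equivalence just noted, this forces $(x,y)=(0,0)$, so $(0,0)$ would be a solution to \cref{problem}. But \cref{p:201101a} asserts precisely that $(0,0)$ is never a solution to \cref{problem}, which is the desired contradiction. Therefore any solution must satisfy $\nabla h(x,y)\neq(0,0)$.

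There is essentially no obstacle here: the corollary is a direct restatement of \cref{p:201101a} once the explicit form $\nabla h(x,y)=(y,x)$ is substituted, since $(y,x)=(0,0)$ collapses to $(x,y)=(0,0)$. The only thing worth being careful about is recording the equivalence $\nabla h(x,y)=(0,0)\siff(x,y)=(0,0)$ explicitly, so that the appeal to \cref{p:201101a} is transparent. The role of this corollary is presumably to license an application of \cref{f:Bert1} (the Lagrange multiplier rule), whose hypothesis requires exactly $\nabla h(x^*)\neq 0$ at the minimizer; thus the natural next step in the paper will be to feed this nonvanishing into \cref{f:Bert1} and obtain a multiplier $\lambda^*$ for each solution of \cref{problem}.
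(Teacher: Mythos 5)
Your proof is correct and matches the paper's argument exactly: both substitute $\nabla h(x,y)=(y,x)$ from \cref{e:gradients} to conclude that a vanishing gradient forces $(x,y)=(0,0)$, and then invoke \cref{p:201101a} for the contradiction. No issues.
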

\begin{proof}
By hypothesis, 
$(x,y)\in P_C(x_0,y_0)$. 
Suppose 
to the contrary that $\nabla h(x,y)=(0,0)$. 
Then, by \cref{e:gradients}, $(x,y)=(0,0)$. 
Now \cref{p:201101a} shows that $(x,y)$ cannot be a solution to 
\cref{problem} which is absurd. 
\end{proof}

\begin{theorem}
\label{t:201101a}
Suppose that $(x,y)\in X\times X$ solves \cref{problem}.
Then there exists a unique $\lambda\in\RR$ such that 
\begin{equation}
\label{e:210805c}
x+\lambda y = x_0
\quad\text{and}\quad
y+\lambda x = y_0.
\end{equation}
\end{theorem}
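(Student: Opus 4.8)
The plan is to recognize \cref{e:210805c} as the stationarity (Lagrange multiplier) condition for the constrained problem \cref{problem} and to obtain it by a direct appeal to the multiplier rule recorded in \cref{f:Bert1}. First I would observe that, since $(x,y)$ solves \cref{problem}, it is in particular a local minimizer of $f$ subject to the constraint $h=0$. The ambient space here is the product Hilbert space $X\times X$, and \cref{f:Bert1} applies verbatim on $X\times X$ because this is itself a real Hilbert space; the differentiability hypothesis is met since $f$ and $h$ were already noted to be continuously Fr\'echet differentiable below \cref{eq:obj}.

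Next, the only remaining hypothesis of \cref{f:Bert1} to verify is the regularity condition $\nabla h(x,y)\neq(0,0)$. This is precisely the content of \cref{c:201101a}, which guarantees that at any solution of \cref{problem} the constraint gradient does not vanish. With both hypotheses in place, \cref{f:Bert1} yields a \emph{unique} $\lambda\in\RR$ satisfying $\nabla f(x,y)+\lambda\nabla h(x,y)=(0,0)$, and the uniqueness claimed in the theorem is inherited directly from the uniqueness asserted in \cref{f:Bert1}.

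Finally, I would substitute the explicit gradient formulas from \cref{e:gradients}, namely $\nabla f(x,y)=(x-x_0,y-y_0)$ and $\nabla h(x,y)=(y,x)$. The stationarity condition then reads $(x-x_0+\lambda y,\,y-y_0+\lambda x)=(0,0)$, and reading off the two coordinates gives $x+\lambda y=x_0$ and $y+\lambda x=y_0$, which is exactly \cref{e:210805c}. I do not expect any genuine obstacle: the statement is a textbook application of the method of Lagrange multipliers, and the single nonroutine point---the constraint qualification $\nabla h\neq(0,0)$---has already been isolated as \cref{c:201101a}. The only mild care required is to invoke the multiplier rule on the product space $X\times X$ rather than on $X$, which is legitimate precisely because $X\times X$ is a real Hilbert space.
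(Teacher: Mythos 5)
Your proposal is correct and follows essentially the same route as the paper: apply the Lagrange multiplier rule of \cref{f:Bert1} on the product Hilbert space $X\times X$, with the constraint qualification $\nabla h(x,y)\neq(0,0)$ supplied by \cref{c:201101a}, and then substitute the gradient formulas \cref{e:gradients} to read off \cref{e:210805c}. The paper's proof is just a terser version of the same argument (it cites \cref{f:Bert1} without explicitly naming \cref{c:201101a}), so nothing further is needed.
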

\begin{proof}
By \cref{f:Bert1}, there exists a unique $\lambda\in\RR$ such that 
$\nabla f(x,y) + \lambda \nabla h(x,y) = (0,0)$. 
Using \cref{e:gradients}, this turns into
$(x-x_0,y-y_0) + \lambda (y,x) = (0,0)$, 
i.e., \cref{e:210805c}. 
\end{proof}

\begin{proposition}
\label{p:201101b}
Suppose that $(x,y)\in X\times X$ and $\lambda\in\RR$ satisfy
\begin{equation}
\label{e:210805d}
x+\lambda y = x_0 
\quad\text{and}\quad 
y + \lambda x = y_0. 
\end{equation}
Then the following hold:
\begin{enumerate}
\item 
\label{p:201101b1}
If $x_0\neq y_0$, then $\lambda\neq 1$. 
\item 
\label{p:201101b2}
If $x_0\neq -y_0$, then $\lambda\neq -1$. 
% \item 
% \label{p:201101b3}
% If $x_0\neq \pm y_0$, then $\lambda\neq \pm 1$. 
\end{enumerate}
\end{proposition}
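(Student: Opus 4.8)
The plan is to prove each implication by contraposition: I will show that $\lambda = 1$ forces $x_0 = y_0$, and that $\lambda = -1$ forces $x_0 = -y_0$. This is the natural strategy because the conclusions are stated as ``$\lambda \neq 1$'' and ``$\lambda \neq -1$'', and the cleanest route to such statements is to substitute the excluded value of $\lambda$ directly into the defining equations \cref{e:210805d} and read off the consequence.

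For part \cref{p:201101b1}, I would set $\lambda = 1$ in \cref{e:210805d}, which gives $x + y = x_0$ and $y + x = y_0$. Since the left-hand sides are identical, subtracting the two equations yields $x_0 - y_0 = 0$, i.e., $x_0 = y_0$. Contrapositively, if $x_0 \neq y_0$, then $\lambda \neq 1$, which is exactly the claim. For part \cref{p:201101b2}, I would set $\lambda = -1$, obtaining $x - y = x_0$ and $y - x = y_0$. Adding these two equations gives $0 = x_0 + y_0$, i.e., $x_0 = -y_0$; contrapositively, $x_0 \neq -y_0$ implies $\lambda \neq -1$.

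There is no real obstacle here: each part is a one-line linear manipulation (a subtraction for the first, an addition for the second) of the hypothesized equations after plugging in the boundary value of $\lambda$. The only thing to be careful about is the logical direction — it is tempting to try to solve for $\lambda$ from \cref{e:210805d} directly, but that runs into exactly the non-invertibility at $\lambda = \pm 1$ that \cref{l:2x2} flags (the block matrix $\begin{psmallmatrix}\Id & \lambda\Id\\ \lambda\Id & \Id\end{psmallmatrix}$ is singular precisely when $\lambda \in \{-1,1\}$). The contrapositive formulation sidesteps this entirely and keeps the argument purely algebraic.

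I anticipate this proposition is a stepping stone: it certifies that whenever $x_0 \neq \pm y_0$ the multiplier $\lambda$ avoids the two singular values, so that \cref{l:2x2} can subsequently be applied to invert the system \cref{e:210805d} and solve for $(x,y)$ in terms of $(x_0, y_0)$ and $\lambda$. Thus, beyond the immediate proof, I would keep in mind that the real payoff of ruling out $\lambda = \pm 1$ is to enable the explicit inversion used in the main projection formula.
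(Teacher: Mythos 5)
Your proof is correct and follows essentially the same route as the paper: both argue by contraposition, substitute $\lambda=\pm 1$ into \cref{e:210805d}, and read off $x_0=y_0$ (respectively $x_0=-y_0$) from the resulting identical (respectively negated) left-hand sides. No further comment is needed.
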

\begin{proof}
\cref{p:201101b1}:
We prove the contrapositive and thus assume that 
$\lambda =1$.
Then, by \cref{e:210805d}, 
$x_0 = x+(1)y = x+y = y+x = y+(1)x = y_0$. 
\cref{p:201101b2}:
Argue similarly to the proof of \cref{p:201101b1}.
% \cref{p:201101b3}:
% Combine \cref{p:201101b1}
% and \cref{p:201101b2}. 
\end{proof}

\begin{proposition}
\label{p:201101c}
Suppose that $(x,y)\in X\times X$  and $\lambda\in\RR\smallsetminus\{-1,1\}$
satisfy 
\begin{equation}
\label{e:201101a}
x+\lambda y = x_0 
\quad\text{and}\quad 
y + \lambda x = y_0. 
\end{equation}
Then 
\begin{equation}
x = \frac{1}{1-\lambda^2}(x_0-\lambda y_0)
\quad\text{and}\quad 
y = \frac{1}{1-\lambda^2}(y_0-\lambda x_0). 
\end{equation}
\end{proposition}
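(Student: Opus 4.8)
The plan is to recognize the pair of equations \cref{e:201101a} as a single linear system on the product space $X\times X$ and to solve it by inverting the associated block operator via \cref{l:2x2}. First I would rewrite \cref{e:201101a} in the equivalent block form
\begin{equation}
\begin{pmatrix}
\Id & \lambda\Id \\
\lambda\Id & \Id
\end{pmatrix}
\begin{pmatrix} x \\ y \end{pmatrix}
=
\begin{pmatrix} x_0 \\ y_0 \end{pmatrix},
\end{equation}
where the block matrix is precisely the linear operator on $X\times X$ appearing in \cref{l:2x2}.

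Since the hypothesis guarantees $\lambda\in\RR\smallsetminus\{-1,1\}$, that operator is invertible and \cref{l:2x2} supplies the explicit inverse. Applying it to both sides then yields
\begin{equation}
\begin{pmatrix} x \\ y \end{pmatrix}
=
\frac{1}{1-\lambda^2}
\begin{pmatrix}
\Id & -\lambda\Id \\
-\lambda\Id & \Id
\end{pmatrix}
\begin{pmatrix} x_0 \\ y_0 \end{pmatrix}
=
\frac{1}{1-\lambda^2}
\begin{pmatrix} x_0-\lambda y_0 \\ y_0-\lambda x_0 \end{pmatrix},
\end{equation}
and reading off the two components gives exactly the claimed formulas for $x$ and $y$.

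An equivalent elementary route avoids \cref{l:2x2} altogether: multiplying the second equation in \cref{e:201101a} by $\lambda$ and subtracting it from the first cancels the $y$-terms and leaves $(1-\lambda^2)x = x_0-\lambda y_0$, while the symmetric manipulation produces $(1-\lambda^2)y = y_0-\lambda x_0$. There is no genuine obstacle here: the only point that must be checked is that division by $1-\lambda^2$ is legitimate, and this is exactly guaranteed by the standing assumption $\lambda\notin\{-1,1\}$, which forces $1-\lambda^2\neq 0$.
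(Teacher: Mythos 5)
Your proof is correct and follows the paper's own route: the paper's proof is simply ``Combine \cref{e:201101a} with \cref{l:2x2},'' which is exactly your block-operator inversion argument. The elementary elimination you sketch at the end is a valid alternative, but the main line of your argument matches the paper.
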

\begin{proof}
Combine \cref{e:201101a} with \cref{l:2x2}.
\end{proof}

\begin{corollary}
\label{c:201101e}
Suppose that $x_0\neq \pm y_0$ and $(x,y)\in X\times X$ solves \cref{problem}. 
Then there exists a unique $\lambda\in\RR\smallsetminus\{-1,1\}$ 
such that $x+\lambda y = x_0$, $y+\lambda x=y_0$, and 
\begin{equation}
  x = \frac{1}{1-\lambda^2}(x_0-\lambda y_0)
  \quad\text{and}\quad 
  y = \frac{1}{1-\lambda^2}(y_0-\lambda x_0). 
\end{equation}
\end{corollary}
\begin{proof}
The existence and uniqueness of $\lambda\in\RR$ such that 
$x+\lambda y = x_0$ and $y+\lambda x=y_0$
follows from \cref{t:201101a}. 
Next, \cref{p:201101b} implies that 
$\lambda\neq\pm 1$. 
Finally, apply \cref{p:201101c}. 
\end{proof}

\begin{proposition}
\label{p:201101d}
Suppose $\lambda\in\RR\smallsetminus\{\pm 1\}$, 
and set 
\begin{equation}
\label{e:p:201101d}
  x := \frac{1}{1-\lambda^2}(x_0-\lambda y_0)
  \quad\text{and}\quad 
  y := \frac{1}{1-\lambda^2}(y_0-\lambda x_0). 
\end{equation}
Then the following hold for $(x,y)$ defined in \cref{e:p:201101d}:
\begin{enumerate}
\item 
\label{p:201101d0}
$x+\lambda y = x_0$ and $y+\lambda x = y_0$. 
\item 
\label{p:201101d0.5}
The objective function $f$ defined in \cref{eq:obj} evaluated at $(x,y)$ is 
\begin{equation}
f(x,y) = \frac{\lambda^2}{2(1-\lambda^2)^2}
\big((1+\lambda^2)(\|x_0\|^2+\|y_0\|^2)-4\lambda\scal{x_0}{y_0} \big).
\end{equation}
\item 
\label{p:201101d1}
$\scal{x}{y}=0$
if and only if 
$(1+\lambda^2)\scal{x_0}{y_0}
= \lambda(\|x_0\|^2 + \|y_0\|^2)$.
\item 
\label{p:201101d1.5}
If $\scal{x}{y}=0$, then 
\begin{equation}
f(x,y) = \thalb\lambda\scal{x_0}{y_0}.
\end{equation}
\end{enumerate}
\end{proposition}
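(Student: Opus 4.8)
The plan is to dispatch the four parts in the order stated, each reducing to a short Hilbert-space computation, with the later parts feeding off the earlier ones. For \cref{p:201101d0} I would simply substitute the definitions of $x$ and $y$ into the combinations $x+\lambda y$ and $y+\lambda x$ and simplify; for instance,
\[
x + \lambda y = \frac{1}{1-\lambda^2}\big((x_0 - \lambda y_0) + \lambda(y_0 - \lambda x_0)\big) = \frac{1}{1-\lambda^2}(1-\lambda^2)x_0 = x_0,
\]
and symmetrically $y+\lambda x = y_0$. Alternatively one can invoke \cref{l:2x2}. There is no obstacle here.

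The identity that powers \cref{p:201101d0.5} is the immediate consequence $x - x_0 = -\lambda y$ and $y - y_0 = -\lambda x$ of \cref{p:201101d0}. Substituting these into the definition of $f$ collapses the objective to $f(x,y) = \thalb\lambda^2(\|x\|^2 + \|y\|^2)$. I would then expand $\|x\|^2 = (1-\lambda^2)^{-2}(\|x_0\|^2 - 2\lambda\scal{x_0}{y_0} + \lambda^2\|y_0\|^2)$ together with the analogous expression for $\|y\|^2$ and add them; the cross terms combine to $-4\lambda\scal{x_0}{y_0}$, and the coefficients of $\|x_0\|^2$ and of $\|y_0\|^2$ each become $1+\lambda^2$, yielding the stated formula.

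For \cref{p:201101d1} I would expand $\scal{x}{y} = (1-\lambda^2)^{-2}\scal{x_0 - \lambda y_0}{y_0 - \lambda x_0}$. The inner product on the right evaluates to $(1+\lambda^2)\scal{x_0}{y_0} - \lambda(\|x_0\|^2 + \|y_0\|^2)$. Since $\lambda\neq\pm 1$ forces $(1-\lambda^2)^2>0$, the vanishing of $\scal{x}{y}$ is equivalent to the vanishing of this numerator, which is precisely the stated condition.

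Finally, for \cref{p:201101d1.5} I would substitute the relation from \cref{p:201101d1} into the formula of \cref{p:201101d0.5}. First note that $\scal{x}{y}=0$ forces $\lambda\neq 0$: otherwise \cref{p:201101d1} would yield $\scal{x_0}{y_0}=0$, contradicting \cref{e:210805b}. Hence I may write $\|x_0\|^2 + \|y_0\|^2 = \lambda^{-1}(1+\lambda^2)\scal{x_0}{y_0}$ and insert it into the bracket appearing in \cref{p:201101d0.5}; that bracket becomes $\lambda^{-1}\scal{x_0}{y_0}\big((1+\lambda^2)^2 - 4\lambda^2\big)$. The algebraic identity $(1+\lambda^2)^2 - 4\lambda^2 = (1-\lambda^2)^2$ then cancels the $(1-\lambda^2)^2$ in the denominator, leaving $f(x,y) = \thalb\lambda\scal{x_0}{y_0}$. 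The only points requiring care — and the closest thing to an obstacle in what is otherwise routine algebra — are spotting this factorization and remembering to rule out $\lambda=0$.
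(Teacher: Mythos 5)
Your proposal is correct and follows essentially the same route as the paper: direct substitution for \cref{p:201101d0}, expansion of $\|x\|^2+\|y\|^2$ for \cref{p:201101d0.5}, expansion of $\scal{x_0-\lambda y_0}{y_0-\lambda x_0}$ for \cref{p:201101d1}, and the factorization $(1+\lambda^2)^2-4\lambda^2=(1-\lambda^2)^2$ for \cref{p:201101d1.5}. The only cosmetic differences are that you reach the bracket in \cref{p:201101d0.5} via the identities $x-x_0=-\lambda y$, $y-y_0=-\lambda x$ rather than by direct expansion, and in \cref{p:201101d1.5} you solve for $\|x_0\|^2+\|y_0\|^2$ (justifying $\lambda\neq 0$ via \cref{e:210805b}) whereas the paper solves for $\lambda\scal{x_0}{y_0}$ and thereby avoids dividing by $\lambda$.
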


\begin{proof}
\cref{p:201101d0}: 
This is an easy algebraic verification. 

\cref{p:201101d0.5}: 
For convenience, set 
\begin{equation}
\label{e:deftau}
  \tau := 
  \|x_0-\lambda y_0\|^2
  +
  \|y_0-\lambda x_0\|^2 
  = 
  (1-\lambda^2)^2\big(\|x\|^2 + \|y\|^2\big). 
\end{equation}
Then 
\begin{subequations}
\label{e:taualt}
\begin{align}
\tau &= 
\|x_0-\lambda y_0\|^2 + 
\|y_0-\lambda x_0\|^2 \\
&= 
\|x_0\|^2 -2\lambda\scal{x_0}{y_0} + \lambda^2\|y_0\|^2
+ \|y_0\|^2 -2\lambda\scal{y_0}{x_0} + \lambda^2\|x_0\|^2\\
&= (1+\lambda^2)(\|x_0\|^2+\|y_0\|^2)-4\lambda\scal{x_0}{y_0}.
\end{align}
\end{subequations}
Using 
\cref{eq:obj}, 
\cref{e:p:201101d}, 
\cref{e:deftau},
and 
\cref{e:taualt},
we obtain 
\begin{subequations}
\begin{align}
f(x,y)
&= \thalb \|x-x_0\|^2 + \thalb \|y-y_0\|^2\\
&= \frac{1}{2}\left\| \frac{x_0-\lambda y_0}{1-\lambda^2}-x_0\right\|^2
+ \frac{1}{2}\left\| \frac{y_0-\lambda x_0}{1-\lambda^2}-y_0\right\|^2 \\
&= \frac{1}{2(1-\lambda^2)^2}
\big(\|x_0-\lambda y_0 - (1-\lambda^2)x_0\|^2 + 
\|y_0-\lambda x_0-(1-\lambda^2)y_0\|^2 \big)\\
&= \frac{\lambda^2}{2(1-\lambda^2)^2}
\big(\|\lambda x_0- y_0\|^2 + 
\|\lambda y_0- x_0\|^2 \big)\\
&= \frac{\lambda^2}{2(1-\lambda^2)^2}
\tau \\
&= \frac{\lambda^2}{2(1-\lambda^2)^2}
\big((1+\lambda^2)(\|x_0\|^2+\|y_0\|^2)-4\lambda\scal{x_0}{y_0} \big).
\end{align}
\end{subequations}

\cref{p:201101d1}: 
Because $1-\lambda^2 \neq 0$ 
and using \cref{e:p:201101d}, 
we have the following equivalences:
\begin{subequations}
\begin{align}
\scal{x}{y} = 0 
&\Leftrightarrow 
\scal{x_0-\lambda y_0}{y_0-\lambda x_0} = 0 \\
&\Leftrightarrow 
\scal{x_0}{y_0} - \lambda\|x_0\|^2 - \lambda\|y_0\|^2 + \lambda^2\scal{x_0}{y_0}=0\\
&\Leftrightarrow 
(1+\lambda^2)\scal{x_0}{y_0}
= \lambda(\|x_0\|^2 + \|y_0\|^2).
\end{align}
\end{subequations}

\cref{p:201101d1.5}: 
Suppose that $\scal{x}{y}=0$. 
By \cref{p:201101d1}, 
\begin{equation}
\label{e:201101b}
\lambda\scal{x_0}{y_0}
= 
\frac{\lambda^2}{1+\lambda^2}\big(\|x_0\|^2 + \|y_0\|^2\big).
\end{equation}
It thus follows from \cref{e:201101b} that 
\begin{subequations}
\label{e:210805e}
\begin{align}
&\hspace{-1cm}
\big(1+\lambda^2\big)\big(\|x_0\|^2+\|y_0\|^2\big) - 4\lambda\scal{x_0}{y_0}\\
&= 
\big(1+\lambda^2\big)\big(\|x_0\|^2+\|y_0\|^2\big) - 4\frac{\lambda^2}{1+\lambda^2}\big(\|x_0\|^2 + \|y_0\|^2\big)\\
&= 
\frac{\|x_0\|^2+\|y_0\|^2}{1+\lambda^2}\big((1+\lambda^2)^2-4\lambda^2 \big)\\
&= 
\frac{\|x_0\|^2+\|y_0\|^2}{1+\lambda^2}\big(1+\lambda^4+2\lambda^2-4\lambda^2 \big)\\
&= 
\frac{\|x_0\|^2+\|y_0\|^2}{1+\lambda^2}\big(1+\lambda^4-2\lambda^2\big)\\
&= 
\frac{\|x_0\|^2+\|y_0\|^2}{1+\lambda^2}\big(1-\lambda^2\big)^2. 
\end{align}
\end{subequations}
Using \cref{p:201101d0.5}, \cref{e:210805e}, and \cref{e:201101b}, we conclude that 
\begin{subequations}
\begin{align}
f(x,y) &= \frac{\lambda^2}{2(1-\lambda^2)^2}\frac{\|x_0\|^2+\|y_0\|^2}{1+\lambda^2}\big(1-\lambda^2\big)^2\\
&=\frac{\lambda^2}{1+\lambda^2}\frac{\|x_0\|^2+\|y_0\|^2}{2}\\
&=\thalb\lambda\scal{x_0}{y_0}.
\end{align}
\end{subequations}
The proof is complete.
\end{proof}

\begin{proposition}
\label{p:quad}
Consider the equation 
\begin{equation}
\label{e:quadlambda}
  \scal{x_0}{y_0}\lambda^2
- (\|x_0\|^2 + \|y_0\|^2)\lambda 
+ \scal{x_0}{y_0} = 0, 
\end{equation}
which is a quadratic polynomial with respect to the variable $\lambda$.
Then \cref{e:quadlambda}
has (possibly distinct) real roots 
\begin{subequations}
\label{e:quadsol}
\begin{align}
\lambda_\pm &:= \frac{\|x_0\|^2 + \|y_0\|^2 
\pm \sqrt{(\|x_0\|^2 + \|y_0\|^2)^2 - 4\scal{x_0}{y_0}^2}}{2\scal{x_0}{y_0}}\\
&= \frac{\|x_0\|^2 + \|y_0\|^2 \pm \|x_0+y_0\|\|x_0-y_0\|}{2\scal{x_0}{y_0}}
\end{align}
\end{subequations}
which satisfy 
$\lambda_+\lambda_-=1$. 
Moreover, 
$\lambda_+\neq\lambda_-$ $\Leftrightarrow$ $x_0\neq \pm y_0$, 
\begin{equation}
\lambda_+\scal{x_0}{y_0}\geq \lambda_-\scal{x_0}{y_0}>0, 
\end{equation}
and the left inequality is strict if and only if $x_0\neq \pm y_0$. 
\end{proposition}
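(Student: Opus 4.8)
The plan is to base the entire proof on a single algebraic identity for the discriminant. Writing \cref{e:quadlambda} as $a\lambda^2 + b\lambda + c = 0$ with $a = c = \scal{x_0}{y_0}$ and $b = -(\|x_0\|^2 + \|y_0\|^2)$, I would first compute
\[
b^2 - 4ac = (\|x_0\|^2 + \|y_0\|^2)^2 - 4\scal{x_0}{y_0}^2.
\]
The crucial observation is that this equals $\|x_0 + y_0\|^2\,\|x_0 - y_0\|^2$: expanding $\|x_0 \pm y_0\|^2 = \|x_0\|^2 \pm 2\scal{x_0}{y_0} + \|y_0\|^2$ and multiplying the two factors via the difference-of-squares pattern yields exactly $(\|x_0\|^2 + \|y_0\|^2)^2 - 4\scal{x_0}{y_0}^2$. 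This immediately shows the discriminant is nonnegative, so the roots are real, and it reproduces the second displayed form of $\lambda_\pm$ in \cref{e:quadsol}.

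For the product of the roots I would invoke Vieta's formula: $\lambda_+\lambda_- = c/a = \scal{x_0}{y_0}/\scal{x_0}{y_0} = 1$, where $\scal{x_0}{y_0}\neq 0$ (by \cref{e:210805b}) is precisely what makes the equation genuinely quadratic so that Vieta applies. For the distinctness claim, note that $\lambda_+ = \lambda_-$ exactly when the discriminant vanishes, i.e.\ when $\|x_0 + y_0\|\,\|x_0 - y_0\| = 0$, which happens precisely when $x_0 = y_0$ or $x_0 = -y_0$; contraposing gives $\lambda_+\neq\lambda_- \Leftrightarrow x_0\neq\pm y_0$.

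The remaining inequalities follow from the same identity. Multiplying the second form of $\lambda_\pm$ through by $\scal{x_0}{y_0}$ gives
\[
\lambda_\pm \scal{x_0}{y_0} = \thalb\big((\|x_0\|^2 + \|y_0\|^2) \pm \|x_0 + y_0\|\,\|x_0 - y_0\|\big),
\]
so the difference $\lambda_+\scal{x_0}{y_0} - \lambda_-\scal{x_0}{y_0} = \|x_0 + y_0\|\,\|x_0 - y_0\| \geq 0$ is the left inequality, strict exactly when $x_0\neq\pm y_0$ (again by the vanishing-discriminant analysis). For the strict positivity $\lambda_-\scal{x_0}{y_0} > 0$, I would use $\scal{x_0}{y_0}\neq 0$ once more: since the discriminant equals $(\|x_0\|^2 + \|y_0\|^2)^2 - 4\scal{x_0}{y_0}^2$ and $4\scal{x_0}{y_0}^2 > 0$, we get $\|x_0 + y_0\|^2\,\|x_0 - y_0\|^2 < (\|x_0\|^2 + \|y_0\|^2)^2$, hence (taking nonnegative square roots) $\|x_0 + y_0\|\,\|x_0 - y_0\| < \|x_0\|^2 + \|y_0\|^2$. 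This makes the numerator of $\lambda_-\scal{x_0}{y_0}$ positive, completing the chain $\lambda_+\scal{x_0}{y_0} \geq \lambda_-\scal{x_0}{y_0} > 0$.

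There is no real obstacle here beyond spotting the discriminant factorization; once that identity is in hand, reality, the second root formula, distinctness, and both inequalities all fall out mechanically. The one place to be careful is to invoke $\scal{x_0}{y_0}\neq 0$ at exactly the two spots where it is needed: to make the polynomial genuinely quadratic so that Vieta's product formula applies, and to force the strict inequality $4\scal{x_0}{y_0}^2 > 0$ that yields $\lambda_-\scal{x_0}{y_0} > 0$.
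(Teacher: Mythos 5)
Your proposal is correct and follows essentially the same route as the paper: factor the discriminant as $\|x_0+y_0\|^2\|x_0-y_0\|^2$ to get reality, the second root formula, and the distinctness criterion, then apply Vieta's product formula. The only (minor) divergence is the final positivity of $\lambda_-\scal{x_0}{y_0}$, which you obtain from the strict bound $\|x_0+y_0\|\,\|x_0-y_0\|<\|x_0\|^2+\|y_0\|^2$ forced by $\scal{x_0}{y_0}\neq 0$, whereas the paper deduces it from $\lambda_+\lambda_-=1>0$ (so $\lambda_\pm$ share a sign) together with $\lambda_+\scal{x_0}{y_0}>0$; both arguments are equally valid.
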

\begin{proof}
First note that 
$\|x_0\mp y_0\|^2 \geq 0$
$\Leftrightarrow$
$\|x_0\|^2 + \|y_0\|^2 \geq \pm 2 \scal{x_0}{y_0}$
$\Leftrightarrow$
$\|x_0\|^2 + \|y_0\|^2 \geq 2 |\negthinspace\scal{x_0}{y_0}\negthinspace|$
which makes the discriminant of 
\cref{e:quadlambda} nonnegative, and equal to $0$ $\Leftrightarrow$ 
$x_0=\pm y_0$. 
Hence, the roots of \cref{e:quadlambda} are real, 
and distinct if and only if $x_0\neq \pm y_0$. 
Second, Vieta's formulas give $\lambda_+\lambda_- = \scal{x_0}{y_0}/\scal{x_0}{y_0}=1$,
so both $\lambda_+$ and $\lambda_-$ are nonzero and they have the same sign. 
Next, the quadratic formula yields 
\begin{subequations}
\begin{align}
\lambda_\pm &= 
\frac{\|x_0\|^2 + \|y_0\|^2 
\pm \sqrt{(\|x_0\|^2 + \|y_0\|^2)^2 - 4\scal{x_0}{y_0}^2}}{2\scal{x_0}{y_0}}\\
&= \frac{\|x_0\|^2 + \|y_0\|^2 \pm \sqrt{\big(\|x_0\|^2 + \|y_0\|^2+2\scal{x_0}{y_0}\big)\big(\|x_0\|^2 + \|y_0\|^2-2\scal{x_0}{y_0}\big)}}{2\scal{x_0}{y_0}}\\
&= \frac{\|x_0\|^2 + \|y_0\|^2 \pm \|x_0+y_0\|\|x_0-y_0\|}{2\scal{x_0}{y_0}}, 
\end{align}
\end{subequations}
which yields \cref{e:quadsol}. 
Clearly, $\lambda_+\scal{x_0}{y_0}>0$ and 
$\lambda_+\scal{x_0}{y_0}\geq\lambda_-\scal{x_0}{y_0}$. 
Finally, as observed above, $\lambda_+$ and $\lambda_-$ have the same sign; hence,  
$\lambda_-\scal{x_0}{y_0}$ is positive as well. 
\end{proof}

\section{Main result}

\label{sec:main}

In this section, we derive formulas for $P_C$ in all possible cases.
We start with the trivial case.

\begin{proposition}
\label{p:trivial}
Suppose that $(x_0,y_0)\in X\times X$ satisfies $\scal{x_0}{y_0}=0$.
Then 
\begin{equation}
P_C(x_0,y_0)=(x_0,y_0).
\end{equation}
\end{proposition}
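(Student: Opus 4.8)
The plan is to observe that the hypothesis $\scal{x_0}{y_0}=0$ places the point $(x_0,y_0)$ directly inside $C$, after which the conclusion follows from the elementary fact that any point already lying in a set is its own unique nearest point in that set.

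First I would note that by the defining relation of $C$ in \cref{e:defC}, the condition $\scal{x_0}{y_0}=0$ is \emph{precisely} the statement that $(x_0,y_0)\in C$. Consequently the distance $d_C(x_0,y_0)$ is zero, attained at $(x_0,y_0)$ itself.

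Next I would verify that the projection is exactly the singleton $\{(x_0,y_0)\}$ and not some larger subset of $C$. For any $(x,y)\in C$ we have $\|(x_0,y_0)-(x,y)\|\geq 0=\|(x_0,y_0)-(x_0,y_0)\|$, so $(x_0,y_0)$ minimizes the distance to $(x_0,y_0)$ over $C$; moreover, equality in $\|(x_0,y_0)-(x,y)\|=0$ forces $(x,y)=(x_0,y_0)$, which yields uniqueness. Hence $P_C(x_0,y_0)=(x_0,y_0)$.

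There is no genuine obstacle here: this is exactly the degenerate situation that was set aside by \cref{e:210805b} throughout the auxiliary section, and it was already anticipated in the remark preceding \cref{e:210805b}. The only point requiring even minimal care is recording that the projection is the single point $(x_0,y_0)$ rather than a larger set, and this follows at once from the strict positivity of the norm on nonzero vectors.
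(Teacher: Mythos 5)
Your proof is correct and follows exactly the paper's argument: the hypothesis $\scal{x_0}{y_0}=0$ means $(x_0,y_0)\in C$ by \cref{e:defC}, so the point is its own (unique) nearest point in $C$. The extra sentence confirming uniqueness via strict positivity of the norm is a harmless elaboration of what the paper leaves implicit.
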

\begin{proof}
This is clear because $(x_0,y_0)\in C$ be definition of $C$ (see \cref{e:defC}). 
\end{proof}

We now turn to a much more involved case the proof of which won't be too 
complicated because of our preparatory work in \cref{sec:aux}.

\begin{proposition}
\label{p:single}
Suppose that 
$(x_0,y_0)\in X\times X$ satisfies
$\scal{x_0}{y_0}\neq 0$ and $x_0\neq\pm y_0$.
Then 
\begin{equation}
\lambda := \frac{\|x_0\|^2 + \|y_0\|^2 
- \|x_0+y_0\|\|x_0-y_0\|}{2\scal{x_0}{y_0}} \neq\pm 1, 
\end{equation}
\begin{equation}
\label{e:yay1}
P_C(x_0,y_0) 
= \frac{1}{1-\lambda^2}\big(x_0-\lambda y_0,y_0-\lambda x_0\big)
\end{equation}
is a \emph{singleton}, 
%and the function $f$ from \cref{eq:obj} evaluated at this point $(x,y)$ is 
and 
\begin{equation}
\label{e:yay2}
%f(x,y) = 
\thalb d_C^2(x_0,y_0) = 
\thalb\lambda\scal{x_0}{y_0} = 
\frac{\|x_0\|^2 + \|y_0\|^2 
- \|x_0+y_0\|\|x_0-y_0\|}{4}. 
\end{equation}
\end{proposition}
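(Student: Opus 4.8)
The plan is to recognize the given scalar as the smaller root $\lambda_-$ of the quadratic studied in \cref{p:quad}, to use the Lagrange-multiplier analysis of \cref{sec:aux} to confine every minimizer to two explicit candidates, and finally to secure existence of a minimizer by collapsing the problem to a finite-dimensional sphere. First I would observe that, under the standing hypotheses $\scal{x_0}{y_0}\neq0$ and $x_0\neq\pm y_0$, \cref{p:quad} supplies two distinct real roots of \cref{e:quadlambda} with product $\lambda_+\lambda_-=1$, and that the $\lambda$ displayed in the statement is exactly $\lambda_-$. Since the two roots are distinct and multiply to $1$, neither can equal $\pm1$: if one did, its reciprocal---namely the other root---would coincide with it. This establishes $\lambda=\lambda_-\neq\pm1$.

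Next I would form the two candidate points $p_\pm:=\tfrac{1}{1-\lambda_\pm^2}(x_0-\lambda_\pm y_0,\,y_0-\lambda_\pm x_0)$, which are well defined because $\lambda_\pm\neq\pm1$. Since $\lambda_\pm$ solve \cref{e:quadlambda}---equivalently the relation $(1+\lambda^2)\scal{x_0}{y_0}=\lambda(\|x_0\|^2+\|y_0\|^2)$---\cref{p:201101d}\cref{p:201101d1} shows $p_\pm\in C$, and \cref{p:201101d}\cref{p:201101d1.5} evaluates $f(p_\pm)=\thalb\lambda_\pm\scal{x_0}{y_0}$. The strict inequality $\lambda_-\scal{x_0}{y_0}<\lambda_+\scal{x_0}{y_0}$ from \cref{p:quad} (strict precisely because $x_0\neq\pm y_0$) then gives $f(p_-)<f(p_+)$, so the feasible point $p_-$ always beats $p_+$; in particular $p_+$ is never a minimizer. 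Conversely, if $(x,y)$ solves \cref{problem}, then \cref{c:201101e} produces a unique $\lambda\neq\pm1$ realizing the standard formula, and feasibility $\scal{x}{y}=0$ together with \cref{p:201101d}\cref{p:201101d1} forces this $\lambda$ to solve \cref{e:quadlambda}; hence $\lambda\in\{\lambda_+,\lambda_-\}$ and $(x,y)\in\{p_+,p_-\}$. Combining the two observations, $P_C(x_0,y_0)\subseteq\{p_-\}$.

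It remains to prove that a minimizer actually exists, and this is the step I expect to be the main obstacle: the usual weak-compactness argument is unavailable here precisely because $C$ is not weakly closed. The plan is to extract existence from the decomposition $C=\bigcup_U U\times U^\perp$ over subspaces with $\dim U\le1$ furnished by \cref{l:Calt}. Writing $\thalb d_C^2(x_0,y_0)$ as the infimum over such $U=\RR u$ (and $U=\{0\}$) of $\thalb(\|x_0-P_Ux_0\|^2+\|P_Uy_0\|^2)=\thalb(\|x_0\|^2-\scal{x_0}{u}^2+\scal{y_0}{u}^2)$, the quantity to be optimized depends on the unit vector $u$ only through its component in the finite-dimensional space $\spn\{x_0,y_0\}$. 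Restricting $u$ to the compact unit sphere of that subspace makes the infimum attained, so $P_C(x_0,y_0)\neq\varnothing$; together with the previous paragraph this yields $P_C(x_0,y_0)=p_-$, which is \cref{e:yay1}. Finally, $\thalb d_C^2(x_0,y_0)=f(p_-)=\thalb\lambda\scal{x_0}{y_0}$, and substituting the closed form of $\lambda=\lambda_-$ produces \cref{e:yay2}.
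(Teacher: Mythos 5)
Your identification of the unique candidate is essentially the paper's own argument: both proofs funnel any solution of \cref{problem} through \cref{c:201101e} and \cref{p:201101d}\cref{p:201101d1} into the two roots $\lambda_\pm$ of \cref{e:quadlambda}, use $\lambda_+\lambda_-=1$ together with $\lambda_+\neq\lambda_-$ to exclude $\pm 1$, and use \cref{p:201101d}\cref{p:201101d1.5} with the strict inequality $\lambda_-\scal{x_0}{y_0}<\lambda_+\scal{x_0}{y_0}$ to discard $p_+$, so that $P_C(x_0,y_0)\subseteq\{p_-\}$. Where you genuinely diverge is the existence step. The paper proves directly that $f(p_-)\leq f(P_Ux_0,P_{U^\perp}y_0)$ for \emph{every} closed subspace $U$, by applying Cauchy--Schwarz to $\scal{x_0+y_0}{R_U(x_0-y_0)}$ with the reflector $R_U=P_U-P_{U^\perp}$; combined with \cref{e:reduc} this simultaneously certifies that the explicit candidate $p_-$ is a global minimizer and re-derives the distance formula. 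You instead reduce the infimum over $C$, via \cref{e:201102b}, to an optimization over unit vectors $u$ that enters only through $\scal{x_0}{u}$ and $\scal{y_0}{u}$, and invoke compactness in the finite-dimensional space $\spn\{x_0,y_0\}$. That route is sound and arguably more elementary (no inequality to discover), at the price of not producing the closed form of the distance until $\lambda_-$ is fed back into \cref{p:201101d}\cref{p:201101d1.5}.

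One small repair is needed in your compactness step. When $\spn\{x_0,y_0\}$ is a \emph{proper} subspace of $X$, the set of components $P_{\spn\{x_0,y_0\}}u$ of unit vectors $u\in X$ is the closed unit \emph{ball} of $\spn\{x_0,y_0\}$, not its unit sphere, so restricting to the sphere could in principle change the infimum. It does not, because the $u$-dependent part $\scal{y_0}{u}^2-\scal{x_0}{u}^2$ is a homogeneous quadratic form $q$: for $0<\|v\|<1$ either $q(v)\geq 0=q(0)$ (and $v=0$ is covered by your $U=\{0\}$ term, yielding the feasible point $(0,y_0)$) or $q(v/\|v\|)=q(v)/\|v\|^2<q(v)$, so the infimum over the ball equals the minimum of the value at the origin and the minimum over the sphere, both attained. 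With that one line added, your argument is complete.
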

\begin{proof}
Assume that 
\begin{equation}
\label{e:210806a}
(x,y)\in P_C(x_0,y_0). 
\end{equation}
(We will show that $P_C(x_0,y_0)\neq\varnothing$ later in this proof.)
Then $(x,y)$ solves \cref{problem}.
>From \cref{c:201101e},
there exists a unique $\lambda\in\RR\smallsetminus\{\pm 1\}$ 
such that $x+\lambda y = x_0$, $y+\lambda x=y_0$, and 
\begin{equation}
\label{e:210806b}
  x = \frac{1}{1-\lambda^2}(x_0-\lambda y_0)
  \quad\text{and}\quad 
  y = \frac{1}{1-\lambda^2}(y_0-\lambda x_0). 
\end{equation}
Because $(x,y)\in C$, \cref{p:201101d}\cref{p:201101d1} shows that 
 of the quadratic equation
\begin{equation}
\label{e:210806c}
\text{$\lambda$ is a solution of}\;\;
(1+\mu^2)\scal{x_0}{y_0} = \mu\big(\|x_0\|^2+\|y_0\|^2\big), 
\end{equation}
which is a quadratic equation in the real variable $\mu$. 
Set
\begin{equation}
\label{e:210806d}
\lambda_\pm := 
 \frac{\|x_0\|^2 + \|y_0\|^2 \pm \|x_0+y_0\|\|x_0-y_0\|}{2\scal{x_0}{y_0}},
\end{equation}
which are the two (possibly distinct) roots of 
the quadratic equation in \cref{e:210806c}. 
By \cref{e:210806c} and \cref{p:quad}, 
we deduce that $\lambda\in\{\lambda_-,\lambda_+\}$ and 
that $\lambda_+\lambda_-=1$. 
Because $x_0\neq\pm y_0$, \cref{p:quad} also yields 
$\lambda_+\neq\lambda_-$ and 
\begin{equation}
\label{e:210806f}
\lambda_+\scal{x_0}{y_0}>\lambda_-\scal{x_0}{y_0}. 
\end{equation}
Hence neither $\lambda_+$ nor $\lambda_-$ is equal to 
$\pm 1$. 
Now we (well) define 
\begin{equation}
\label{e:210806e}
  x_\pm = \frac{1}{1-\lambda_\pm^2}(x_0-\lambda_\pm y_0)
  \quad\text{and}\quad 
  y_\pm = \frac{1}{1-\lambda_\pm^2}(y_0-\lambda_\pm x_0). 
\end{equation}
In view of \cref{e:210806b}, $x\in \{x_-,x_+\}$. 
Because $\lambda_\pm$ solve the quadratic equation in 
\cref{e:210806c}, we deduce from \cref{p:201101d}\ref{p:201101d1} that 
$\scal{x_+}{y_+} = \scal{x_-}{y_-}=0$, i.e., 
$(x_+,y_+)$ and $(x_-,y_-)$ both belong to $C$. 
Recalling the definition of $f$ from \cref{eq:obj}, 
we note that \cref{p:201101d}\cref{p:201101d1.5} and 
\cref{e:210806f} result in 
$f(x_+,y_+)>f(x_-,y_-)$.
On the other hand, 
we know that $(x,y)$ is either 
$(x_+,y_+)$ or 
$(x_-,y_-)$.
Altogether, because we've assumed at the beginning of the proof in \cref{e:210806a} 
that $(x,y)$ is a minimizer of $f$, we conclude that 
$(x,y)=(x_-,y_-)$. This yields \cref{e:yay1}.
Moreover, \cref{e:yay2} follows because 
\begin{equation}
\label{e:210806g}
f(x,y) = f(x_-,y_-)=\frac{\lambda_-\scal{x_0}{y_0}}{2} 
= \frac{\|x_0\|^2+\|y_0\|^2 - \|x_0+y_0\|\|x_0-y_0\|}{4}
\end{equation}
by \cref{e:210806d}. 

It remains now to show that $P_C(x_0,y_0)\neq \varnothing$.
Let $U$ be a closed linear subspace of $X$. 
In view of \cref{e:reduc}, \cref{e:210806g}, and \cref{eq:obj} it suffices to show that 
$f(x,y) \stackrel{?}{\leq} f(P_Ux_0,P_{U^\perp}y_0)$, i.e., 
\begin{equation}
\label{e:holy}
  \frac{\|x_0\|^2+\|y_0\|^2 - \|x_0+y_0\|\|x_0-y_0\|}{4}
  \stackrel{?}{\leq}
\halb\|P_Ux_0-x_0\|^2 + \halb\|P_{U^\perp}y_0-y_0\|^2.
\end{equation}
To this end, recall that the reflector $R_U := P_U-P_{U^\perp}$ 
is a linear isometry. 
Using Cauchy-Schwarz, we thus estimate
\begin{subequations}
\begin{align}
&\hspace{-1cm}\|x_0+y_0\|\|x_0-y_0\|\\
&=
\|x_0+y_0\|\|R_U(x_0-y_0)\|\\
&\geq 
\scal{x_0+y_0}{R_U(x_0-y_0)}\\
&=
\scal{x_0+y_0}{P_U(x_0-y_0)-P_{U^\perp}(x_0-y_0)}\\
&=
\scal{x_0+y_0}{P_U(x_0-y_0)}
+\scal{x_0+y_0}{P_{U^\perp}(y_0-x_0)}\\
&=
\scal{P_U(x_0+y_0)}{P_U(x_0-y_0)}
+\scal{P_{U^\perp}(x_0+y_0)}{P_{U^\perp}(y_0-x_0)}\\
&=
\scal{P_Ux_0+P_Uy_0}{P_Ux_0-P_Uy_0}
+\scal{P_{U^\perp}x_0+P_{U^\perp}y_0}{P_{U^\perp}y_0-P_{U^\perp}x_0}\\
&=\|P_Ux_0\|^2-\|P_{U}y_0\|^2
+\|P_{U^\perp}y_0\|^2-\|P_{U^\perp}x_0\|^2.
\end{align}
\end{subequations}
This implies
\begin{equation}
\|P_Ux_0\|^2 +\|P_{U^\perp}y_0\|^2 - \|x_0+y_0\|\|x_0-y_0\|
\leq \|P_{U}y_0\|^2 + \|P_{U^\perp}x_0\|^2.
\end{equation}
Adding to this $\|P_{U^\perp}x_0\|^2+\|P_Uy_0\|^2$ yields 
\begin{equation}
\label{e:210806h}
\|x_0\|^2 +\|y_0\|^2 - \|x_0+y_0\|\|x_0-y_0\|
\leq 2\|P_{U}y_0\|^2 + 2\|P_{U^\perp}x_0\|^2.
\end{equation}
Finally, dividing \cref{e:210806h} by $4$ gives 
\cref{e:holy} and we are done. 
\end{proof}

\begin{proposition}
\label{p:multi}
Suppose that $(x_0,y_0)\in X\times X$ satisfies 
$x_0=\pm y_0$. 
Then 
\begin{equation}
\label{e:210807b}
P_C(x_0,y_0) = \bigcup_{\text{$U$ is a closed subspace of $X$}}
\big\{(P_Ux_0,P_{U^\perp}y_0) \big\};
\end{equation}
this can also be written as 
\begin{equation}
\label{e:210807c}
P_C(x_0,y_0) = \big\{(0,y_0)\big\} \cup 
\Menge{(0,y_0) + \big(\scal{u}{x_0}u,-\scal{u}{y_0}u\big)}{u\in\myS},
\end{equation}
where $\myS = \menge{z\in X}{\|z\|=1}$ is the unit sphere of $X$. 
\end{proposition}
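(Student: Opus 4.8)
The plan is to combine the reduction \cref{e:reduc} with the key observation that, when $x_0 = \pm y_0$, \emph{every} candidate point $(P_U x_0, P_{U^\perp} y_0)$ sits at the \emph{same} distance from $(x_0,y_0)$. Writing $y_0 = \varepsilon x_0$ with $\varepsilon \in \{-1,+1\}$, the identities $P_{U^\perp} y_0 = \varepsilon P_{U^\perp} x_0 = \varepsilon(x_0 - P_U x_0)$ together with the Pythagorean splitting $\|x_0\|^2 = \|P_U x_0\|^2 + \|P_{U^\perp} x_0\|^2$ give
\[
\|P_U x_0 - x_0\|^2 + \|P_{U^\perp} y_0 - y_0\|^2
= \|P_{U^\perp} x_0\|^2 + \|P_U x_0\|^2 = \|x_0\|^2,
\]
\emph{independently of $U$}. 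This equidistance is precisely what forces the projection to be multivalued, and it reduces the whole problem to identifying $\|x_0\|$ as the minimal distance $d_C(x_0,y_0)$ and to bookkeeping the candidate set.

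First I would establish that $d_C(x_0,y_0) = \|x_0\|$. The upper bound is immediate, since $(0,y_0)\in C$ realizes distance $\|x_0\|$. For the lower bound --- which I expect to be the main technical obstacle --- I would show that every $(x,y)\in C$ satisfies $\|x-x_0\|^2 + \|y-y_0\|^2 \geq \|x_0\|^2$. Expanding and using $\|x_0\|=\|y_0\|$ reduces this to the inequality $\|x\|^2 + \|y\|^2 + \|x_0\|^2 \geq 2\scal{x+\varepsilon y}{x_0}$. Because $\scal{x}{y}=0$ yields $\|x+\varepsilon y\| = \sqrt{\|x\|^2+\|y\|^2}$, Cauchy--Schwarz bounds the right-hand side by $2\sqrt{\|x\|^2+\|y\|^2}\,\|x_0\|$, and the arithmetic--geometric mean inequality applied to $\|x\|^2+\|y\|^2$ and $\|x_0\|^2$ finishes the estimate.

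With $d_C(x_0,y_0)=\|x_0\|$ secured, the displayed equality above shows that each candidate $(P_U x_0, P_{U^\perp} y_0)$ lies in $C$ (its two components sit in $U$ and $U^\perp$, cf.\ \cref{l:Calt}) and attains the minimal distance, hence belongs to $P_C(x_0,y_0)$; conversely \cref{e:reduc} says every point of $P_C(x_0,y_0)$ \emph{is} such a candidate. The two inclusions give the equality \cref{e:210807b}. In particular $P_C(x_0,y_0)\neq\varnothing$, so proximinality is obtained constructively here rather than by a separate argument.

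Finally, to pass from \cref{e:210807b} to the explicit form \cref{e:210807c}, I would observe that a candidate depends on $U$ only through $v := P_U x_0$, since $P_{U^\perp} y_0 = \varepsilon(x_0 - v)$ is then determined. If $v=0$ the candidate is $(0,y_0)$. If $v\neq 0$, set $u := v/\|v\|\in\myS$; using $\scal{x_0}{v}=\|v\|^2$ one gets $\scal{x_0}{u}=\|v\|$, hence $P_{\RR u}x_0 = \scal{x_0}{u}u = v$, so the very same candidate already arises from the one-dimensional subspace $\RR u$. Rewriting this rank-one candidate as $\big(\scal{x_0}{u}u,\ y_0 - \scal{y_0}{u}u\big) = (0,y_0)+\big(\scal{u}{x_0}u, -\scal{u}{y_0}u\big)$ and letting $u$ range over $\myS$ produces exactly \cref{e:210807c}; the isolated point $(0,y_0)$ is retained separately to cover the one-dimensional underlying space, where no unit vector is orthogonal to $x_0$ and so $(0,y_0)$ is not captured by the spherical family.
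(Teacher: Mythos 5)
Your proof is correct, and its skeleton --- the observation that every candidate $(P_Ux_0,P_{U^\perp}y_0)$ lies in $C$ and is equidistant from $(x_0,y_0)$, combined with the reduction \cref{e:reduc} and a pass to subspaces of dimension at most one for the explicit form \cref{e:210807c} --- matches the paper's. Where you genuinely diverge is in certifying that the common value $\|x_0\|^2$ is actually $d_C^2(x_0,y_0)$: you prove the lower bound $\|x-x_0\|^2+\|y-y_0\|^2\geq\|x_0\|^2$ for \emph{every} $(x,y)\in C$ directly, using $\|x+\varepsilon y\|^2=\|x\|^2+\|y\|^2$ (from $\scal{x}{y}=0$), Cauchy--Schwarz, and the AM--GM inequality; I checked this estimate and it is valid. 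The paper instead obtains the lower bound for free from \cref{l:Calt}: since $C=\bigcup_U U\times U^\perp$ and the minimum of $f$ over each closed convex piece $U\times U^\perp$ is attained at the corresponding candidate with a value independent of $U$, the infimum over $C$ equals that common value. Your route costs one extra (but clean) inequality and buys a self-contained certificate for the minimal distance that does not lean on the union decomposition; the paper's route is shorter but leaves that reduction implicit. The remaining bookkeeping in your argument --- a candidate depends on $U$ only through $v=P_Ux_0$, a nonzero $v$ is realized by the rank-one subspace $\RR u$ with $u=v/\|v\|$ via $\scal{x_0}{v}=\|v\|^2$, and $(0,y_0)$ is kept separately for the case $U=\{0\}$ --- is correct and agrees with the paper's use of \cref{e:201102b}.
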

\begin{proof}
In view of \cref{l:Calt}, 
let $U$ be an arbitrary closed subspace of $X$. 
Then 
\begin{equation}
P_{U\times U^\perp}(x_0,y_0) = \big(P_Ux_0,P_{U^\perp}y_0\big). 
\end{equation}
Let $f$ be defined as in \cref{eq:obj}. 
Using the fact that $P_U+P_{U^\perp} = \Id$ in 
\cref{e:220105a}, 
the assumption that $x_0=\pm y_0$ in \cref{e:210807a}, 
 and the linearity of $P_U$ in 
\cref{e:220105b},  
we see that 
\begin{subequations}
\begin{align}
f\big(P_Ux_0,P_{U^\perp}y_0\big)
&=\thalb \|x_0-P_Ux_0\|^2 + \thalb\|y_0-P_{U^\perp}y_0\|^2\\
&=\thalb \|P_{U^\perp}x_0\|^2 + \thalb\|P_Uy_0\|^2\label{e:220105a}\\
&=\thalb \|P_{U^\perp}x_0\|^2 + \thalb\|P_U(\pm x_0)\|^2\label{e:210807a}\\
&=\thalb \|P_{U^\perp}x_0\|^2 + \thalb\|P_Ux_0\|^2\label{e:220105b}\\
&=\thalb\|x_0\|^2\\
&=\tfrac{1}{4}\|x_0\|^2 + \tfrac{1}{4}\|y_0\|^2
\end{align}
\end{subequations}
is \emph{independent} of $U$!
This proves \cref{e:210807b}.

We now tackle \cref{e:210807c} via \cref{e:201102b}. 
So let $U$ be a linear subspace of $X$ with $\dim U \leq 1$.

If $U=\{0\}$, then 
$U^\perp=X$ and $(P_Ux_0,P_{U^\perp} y_0) = (0,y_0)$
which is the first term on the right side of \cref{e:210807c}. 

Now assume that $\dim U=1$, say $U=\RR u$, where $u\in\myS$. 
Then $\|u\|=1$, 
$P_Ux_0=\scal{u}{x_0}u$
and $P_{U^\perp}y_0 = y_0 -\scal{u}{y_0}u$.
Hence
\begin{equation}
  \big(P_Ux_0,P_{U^\perp}y_0\big)
  = (0,y_0) + \big(\scal{u}{x_0}u,-\scal{u}{y_0}u\big)
\end{equation}
which yields 
the second term on the right side of \cref{e:210807c}. 
\end{proof}

Let us summarize our work in one convenient theorem.

\begin{theorem}[\bf main result]
\label{t:main}
The set $C$ defined in \cref{e:defC} is proximinal.
Let $(x_0,y_0)\in X\times X$.
Then \emph{exactly one} of the following three cases occurs.
\begin{enumerate}
\item 
\label{t:main1}
$\scal{x_0}{y_0}=0$, $\thalb d_C^2(x_0,y_0)=0$, and 
\begin{equation}
\label{e:210807e}
P_C(x_0,y_0)=(x_0,y_0). 
\end{equation}
\item 
\label{t:main2}
$\scal{x_0}{y_0}\neq 0$, $x_0\neq\pm y_0$, 
\begin{equation}
\label{e:210807d}
P_C(x_0,y_0) 
= \frac{1}{1-\lambda^2}\big(x_0-\lambda y_0,y_0-\lambda x_0\big), 
\end{equation}
and 
\begin{equation}
\thalb d_C^2(x_0,y_0) = 
\thalb\lambda\scal{x_0}{y_0} = 
\frac{\|x_0\|^2 + \|y_0\|^2 
- \|x_0+y_0\|\|x_0-y_0\|}{4}, 
\end{equation}
where 
\begin{equation}
\label{e:dalambda}
\lambda := \frac{\|x_0\|^2 + \|y_0\|^2 
- \|x_0+y_0\|\|x_0-y_0\|}{2\scal{x_0}{y_0}} \neq\pm 1. 
\end{equation}
\item 
\label{t:main3}
$\scal{x_0}{y_0}\neq 0$, $x_0=\pm y_0$, 
$\thalb d_C^2(x_0,y_0)=\frac{1}{4}(\|x_0\|^2+\|y_0\|^2)$, 
and 
\begin{equation}
P_C(x_0,y_0) = \big\{(0,y_0)\big\} \cup 
\Menge{(0,y_0) + \big(\scal{u}{x_0}u,-\scal{u}{y_0}u\big)}{u\in\myS} 
\end{equation}
is not a singleton, 
where $\myS = \menge{z\in X}{\|z\|=1}$ is the unit sphere of $X$. 
\end{enumerate}
\end{theorem}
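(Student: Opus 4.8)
The plan is to assemble the main theorem directly from the work already completed in the auxiliary and main sections, treating it as a bookkeeping exercise rather than a new argument. The theorem has two parts: first, the blanket claim that $C$ is proximinal, and second, a trichotomy describing $P_C(x_0,y_0)$ and the distance $\thalb d_C^2(x_0,y_0)$ according to which of the three mutually exclusive geometric conditions on $(x_0,y_0)$ holds. The key observation is that the three conditions genuinely partition $X\times X$: either $\scal{x_0}{y_0}=0$, or $\scal{x_0}{y_0}\neq 0$ with $x_0\neq\pm y_0$, or $\scal{x_0}{y_0}\neq 0$ with $x_0=\pm y_0$. I would open the proof by noting this exhaustiveness and mutual exclusivity, which is immediate once one checks that $x_0=\pm y_0$ forces $\scal{x_0}{y_0}=\pm\|x_0\|^2$, so the second and third cases are distinguished precisely by whether $x_0=\pm y_0$.

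Next I would dispatch each case by citation. Case \cref{t:main1} is exactly \cref{p:trivial}, with $\thalb d_C^2(x_0,y_0)=0$ following since $(x_0,y_0)\in C$ gives $f(x_0,y_0)=0$. Case \cref{t:main2} is precisely the content of \cref{p:single}, which supplies both the singleton formula \cref{e:210807d} for $P_C(x_0,y_0)$, the value of $\lambda$ in \cref{e:dalambda}, the assertion $\lambda\neq\pm 1$, and the distance formula. Case \cref{t:main3} is \cref{p:multi}, which gives the union formula and its reparametrization over the unit sphere; the distance value $\tfrac{1}{4}(\|x_0\|^2+\|y_0\|^2)$ is read off from the computation inside that proof showing $f(P_Ux_0,P_{U^\perp}y_0)=\tfrac14\|x_0\|^2+\tfrac14\|y_0\|^2$ independently of $U$.

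The only genuinely new assertion in the theorem — the one not literally restated in a prior proposition — is that $C$ is \emph{proximinal}, i.e., that $P_C(x_0,y_0)\neq\varnothing$ for \emph{every} $(x_0,y_0)$. I would establish this by the same case split: in Cases \cref{t:main1} and \cref{t:main3} nonemptiness is visible directly from the explicit descriptions, and in Case \cref{t:main2} nonemptiness is exactly what the final paragraph of the proof of \cref{p:single} establishes (where the candidate point is shown to beat every competitor of the form $(P_Ux_0,P_{U^\perp}y_0)$, and by \cref{e:reduc} these exhaust all candidates). Since the three cases cover all of $X\times X$, proximinality follows. I would also remark that the non-singleton claim in Case \cref{t:main3} needs $\myS\neq\varnothing$, which holds because $x_0=\pm y_0$ together with $\scal{x_0}{y_0}\neq 0$ forces $x_0\neq 0$, so $X\neq\{0\}$.

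I expect no real obstacle here, since all the mathematical substance lives in the earlier propositions; the main care required is purely organizational — verifying that the three cases are exhaustive and disjoint, and confirming that proximinality in the second case is genuinely delivered by the nonemptiness argument at the end of \cref{p:single} rather than assumed. The subtlest point worth flagging explicitly is that \cref{e:reduc} only gives an \emph{inclusion} $P_C(x_0,y_0)\subseteq\bigcup_U\{(P_Ux_0,P_{U^\perp}y_0)\}$, so to conclude proximinality in Case \cref{t:main2} one must know both that the infimum over this family is attained (shown in \cref{p:single}) and that the attaining point indeed lies in $C$, which is why the reduction lemma and \cref{p:single} must be invoked together.
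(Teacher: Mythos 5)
Your proposal is correct and follows exactly the paper's route: the paper's entire proof is ``Combine \cref{p:trivial}, \cref{p:single}, and \cref{p:multi}'', and your additional bookkeeping (exhaustiveness and disjointness of the trichotomy, locating the proximinality argument in the final paragraph of the proof of \cref{p:single}, and reading the distance value off the computation in \cref{p:multi}) is precisely the implicit content of that one-line proof, spelled out.
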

\begin{proof}
Combine \cref{p:trivial}, \cref{p:single}, and \cref{p:multi}. 
\end{proof}

\begin{remark}
Several results regarding \cref{t:main} are in order.
\begin{enumerate}
  \item {\bf (relationship to Elser's work)} Case~\cref{t:main2} was considered by Elser who obtained the basic 
  structure of \cref{e:210807d} in a more general setting; see 
  \cite[equation~(19)]{Elser}. However, his analysis is carried out in 
  the finite-dimensional setting. And neither was the
  explicit formula for $\lambda$ in \cref{e:dalambda} presented nor the 
  case~\cref{t:main3} discussed. 
  \item A convenient \emph{selection} of $P_C$ in case~\cref{t:main3} is 
  $(0,y_0)$ or $(x_0,0)$. 
  \item If we work in $X=\RR^n$ and we require the complete projection in 
  case~\cref{t:main3}, then we may invoke \cref{f:para}. 
  \item It is possible to subsume case~\cref{t:main1} into case~\cref{t:main2} by 
  setting $\lambda=0$ and using \cref{e:210807d} to obtain \cref{e:210807e}. 
  \item A tight \emph{injective} parametrization in case~\cref{t:main3} is
\begin{equation}
 P_C(x_0,y_0) = \big\{(0,y_0)\big\} \uplus \biguplus_{u\in\myS\;\text{and}\;\scal{u}{x_0}>0}
 \Big\{(0,y_0) + \big(\scal{u}{x_0}u,-\scal{u}{y_0}u\big)\Big\}, 
 \end{equation}
where ``$\uplus$'' denotes disjoint union. 
Hence the \emph{cardinality} of $P_C(x_0,y_0)$ is the same as the cardinality 
of $\myS$. 
\end{enumerate}
\end{remark}

When $X=\RR$, then the reader may confirm that \cref{t:main} simplifies to the following:

\begin{example}
\label{ex:cross}
Suppose that $X=\RR$. Then 
\begin{equation}
C=\menge{(x,y)\in\RR^2}{xy=0}
= (\RR\times\{0\})\cup (\{0\}\times\RR). 
\end{equation}
Let $(x_0,y_0)\in\RR^2$. 
Then exactly one of the following cases holds.
\begin{enumerate}
\item $|x_0|\neq |y_0|$ and 
\begin{equation}
P_C(x_0,y_0) = 
\begin{cases}
(0,y_0), &\text{if $|x_0|<|y_0|$;}\\
(x_0,0), &\text{if $|x_0|>|y_0|$.}
\end{cases}
\end{equation}
\item $|x_0|=|y_0|$ and 
\begin{equation}
  P_C(x_0,y_0) = \big\{(x_0,0),(0,y_0)\big\}. 
\end{equation}
\end{enumerate}
For an illustration, see \cref{fig}. 
\end{example}

\begin{figure}[H]
  \centering
\begin{tikzpicture}
\begin{axis}
[width=6in,axis equal image,
  axis lines=middle,
  xmin=-2,xmax=2,samples=201,
  xlabel=$x$,ylabel=$y$,
  ymin=-2,ymax=2,
  restrict y to domain=-2:2,
  enlargelimits={abs=0.5cm},
  axis line style={latex-latex},
  ticklabel style={font=\tiny,fill=white},
  xtick={\empty},ytick={\empty},
  xlabel style={at={(ticklabel* cs:1)},anchor=north west},
  ylabel style={at={(ticklabel* cs:1)},anchor=south west},
]

% Convert all points into axis cs system
\node[label=right:{}] at (axis cs: {2.5*cos(45)},{2.5*sin(45)})       (a){$y=x$};
\node[label=above:{}] at (axis cs: {2.5*cos(135)},{2.5*sin(135)}) (b){$y=-x$};
\node[label=below left:{$(0,0)$}] at (axis cs: 0,0)     (P){};
\node[] at (axis cs: {2.5*cos(-135)},{2.5*sin(-135)}) (e){}; 
\node[] at (axis cs: {2.5*cos(-45)},{2.5*sin(-45)})  (f){};

% points
\node [fill,circle, scale=0.2, label=right:{$(x_{0},y_{0})$}] (a1) at (axis cs: {cos(45)},{sin(45)})  {$(x_{0},y_{0})$};
\node [fill,circle, scale=0.2, label=below:{$(x_{0},0)$}] (a2) at (axis cs: {cos(45)},{sin(0)})  {$(x_{0},0)$};
\node [fill,circle, scale=0.2, label=left:{$(0,y_{0})$}] (a3) at (axis cs: {cos(90)},{sin(45)})  {$(0,y_{0})$};
\node [fill,circle, scale=0.2] (a4) at (axis cs: {2.5*cos(45)},{1.5*sin(45)})  {$(x_{0},y_{0})$};
\node [fill,circle, scale=0.2] (a5) at (axis cs: {2.5*cos(45)},{2*sin(0)})  {$(x_{0},y_{0})$};
\node [fill,circle, scale=0.2] (a6) at (axis cs: {-1.1*cos(45)},{2.2*sin(45)})  {$(x_{0},y_{0})$};
\node [fill,circle, scale=0.2] (a7) at (axis cs: {cos(90)},{2.2*sin(45)})  {$(x_{0},y_{0})$};
\node [fill,circle, scale=0.2] (a8) at (axis cs: {-1.8*cos(45)},{sin(0)})  {$(x_{0},0)$};
\node[fill,circle, scale=0.2] at (axis cs: {1.8*cos(-135)},{1.3*sin(-135)}) (a9) {$(x_{0},y_{0})$};
\node[fill,circle, scale=0.2] at (axis cs: {2*cos(-45)},{2*sin(-45)}) (a10) {$(x_{0},y_{0})$};
\node[fill,circle, scale=0.2] at (axis cs: {2*cos(-45)},{0*sin(-135)}) (a11) {$(x_{0},y_{0})$};
\node[fill,circle, scale=0.2] at (axis cs: {cos(-90)},{2*sin(-45)}) (a12) {$(x_{0},y_{0})$};

%\node[label=above left:$A$] at (axis cs: {2*cos(45)},{2*sin(45)})(A){}; 
%\node[label=below:$B$] at (axis cs: {2.5*cos(20)},{2.5*sin(20)}) (B){} ;
%\node[label=below right:$Q$,coordinate] at ($(P)!(A)!(B)$) (Q){};
%\node [] at (axis cs:0.5,3){$l$};
%\node [] at (axis cs:3,0.9){$k$};
%connect them
\draw[blue!50!black,<->]  (a) -- (e);
\draw[blue!50!black,<->,name path=linea]  (b) -- (f); 
\draw[purple!50!black,->,>=stealth,dashed,name path=linea]  (a1) -- (a2); 
\draw[purple!50!black,->,>=stealth,dashed,name path=linea]  (a1) -- (a3); 
\draw[purple!50!black,->,>=stealth,dashed,name path=linea]  (a4) -- (a5); 
\draw[purple!50!black,->,>=stealth,dashed,name path=linea]  (a6) -- (a7); 
\draw[purple!50!black,->,>=stealth,dashed,name path=linea]  (a9) -- (a8); 
\draw[purple!50!black,->,>=stealth,dashed,name path=linea]  (a10) -- (a11); 
\draw[purple!50!black,->,>=stealth,dashed,name path=linea]  (a10) -- (a12); 

\end{axis}

\end{tikzpicture}
  \caption{Projecting onto the cross $C$ when $X=\RR$ (see \cref{ex:cross}).}
  \label{fig}
\end{figure}
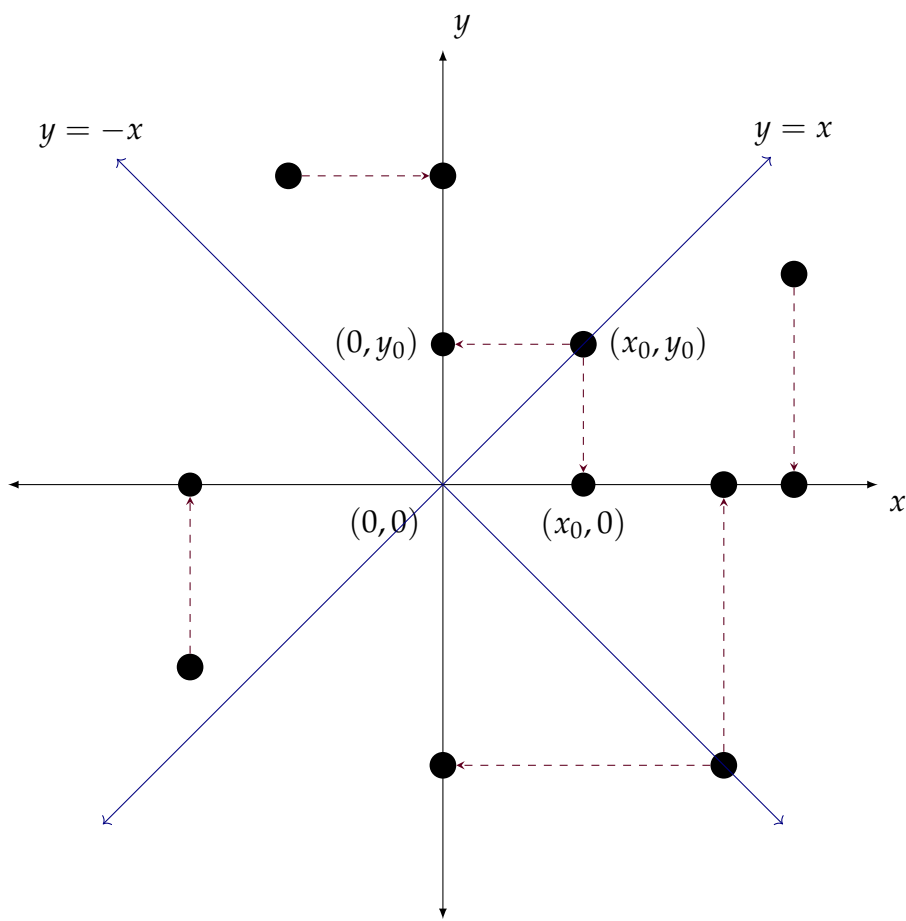

\section*{Acknowledgments}
The authors are grateful to the reviewers and the handling editor for their thoughtful 
and perceptive comments. 
HHB and XW were supported by the Discovery grants of Natural Sciences and
Engineering Research Council of Canada. 
MKL is partially supported by SERB-UBC fellowship and NSERC Discovery grants of HHB and XW.

\end{document}